\documentclass{amsart}

\usepackage{amsmath, amsthm, amssymb, mathrsfs, dsfont, fontenc}
\usepackage{tikz}

\usepackage[utf8]{inputenc}

\usepackage{xcolor}
\usetikzlibrary{scopes}

\usepackage[T1]{fontenc}

\usepackage[colorlinks=false,hidelinks]{hyperref}

\newtheorem{theorem}{Theorem}[section]

\newtheorem{corollary}[theorem]{Corollary}
\newtheorem{lemma}[theorem]{Lemma}
\newtheorem{remark}[theorem]{Remark}

\numberwithin{equation}{section}

\begin{document}

\title[Asai cube $L$-functions]{Asai cube $L$-functions and \\ the local Langlands correspondence}

\author[G. Henniart]{Guy Henniart}
\author[L. Lomel\'i]{Luis Lomel\'i}

\begin{abstract}
Let $F$ be a non-archimedean locally compact field. We study a class of Langlands-Shahidi pairs $({\bf H},{\bf L})$, consisting of a quasi-split connected reductive group $\bf H$ over $F$ and a Levi subgroup $\bf L$ which is closely related to a product of restriction of scalars of ${\rm GL}_1$'s or ${\rm GL}_2$'s. We prove the compatibility of the resulting local factors with the Langlands correspondence. In particular, let $E$ be a cubic separable extension of $F$. We consider a simply connected quasi-split semisimple group $\bf H$ over $F$ of type $D_4$, with triality corresponding to $E$, and let $\bf L$ be its Levi subgroup with derived group ${\rm Res}_{E/F} {\rm SL}_2$. In this way we obtain Asai cube local factors attached to irreducible smooth representations of ${\rm GL}_2(E)$; we prove that they are Weil-Deligne factors obtained via the local Langlands correspondence for ${\rm GL}_2(E)$ and tensor induction from $E$ to $F$. A consequence is that Asai cube $\gamma$- and $\varepsilon$-factors become stable under twists by highly ramified characters.
\end{abstract}

\maketitle

\section{Introduction}

\subsection{} Let $F$ be a non-archimedean locally compact field. Let $\bf G$ be a quasi-split connected reductive group over $F$, and let $r$ be a complex representation of the Langlands dual group ${}^{\rm L}G$ of $\bf G$. For interesting choices of $\bf G$ and $r$, the Langlands-Shahidi method attaches a $\gamma$-factor $\gamma(s,\pi,r,\psi)$ to a smooth non-trivial character $\psi$ of $F$ and an irreducible smooth generic representation $\pi$ of ${\bf G}(F)$. 

Let $\mathcal{W}_F$ denote the absolute Weil group of $F$, and $\mathcal{W}_F'$ its Weil-Deligne group. The local Langlands correspondence --still mostly conjectural-- associates to $\pi$ a Langlands parameter $\phi_\pi: \mathcal{W}_F' \rightarrow {}^{\rm L}G$. Whenever such a correspondence is established for $\bf G$, we should have the following main equality:
\begin{equation}\label{maineq}
   \gamma(s,\pi,r,\psi) = \gamma(s,r \circ \phi_\pi,\psi),
\end{equation}
where the $\gamma$-factors on the right are the ones defined by Langlands and Deligne.

\subsection{} A noteworthy example arises from a trio $({\bf G},{\bf H},{\bf L})$ of quasi-split groups such that: $\bf G$ is the restriction of scalars ${\rm Res}_{E/F}{\rm GL}_2$, where $E$ is a cubic separable extension of $F$, so that $G = {\bf G}(F) = {\rm GL}_2(E)$; $\bf H$ is an ambient quasi-split group (over $F$) semisimple and simply connected of type $D_4$, with triality corresponding to $E/F$ (see \S~\ref{AsaiL}); and $\bf L$ is the maximal Levi subgroup of $\bf H$ obtained by removing the central root $\alpha_2$ from the Dynkin diagram of $\bf H$:

\begin{center}
\begin{tikzpicture}
   \draw[circle] (180:1) node {$\bullet$};
   \draw[circle] (180:1) node[left] {$\alpha_1$};
   \draw[circle] (195:1) arc (195:285:1);
   \draw[circle] (300:1) node {$\bullet$};
   \draw[circle] (300:1.35) node {$\alpha_4$};
   \draw[circle] (315:1) arc (315:405:1);
   \draw[circle] (60:1) node {$\bullet$};
   \draw[circle] (60:1.35) node {$\alpha_3$};
   \draw[circle] (75:1) arc (75:165:1);
   \draw[circle] (0:0) node {$\bullet$} node[right] {$\alpha_2$};
   \draw[circle] (60:.125) -- (60:.875);
   \draw[circle] (180:.125) -- (180:.875);
   \draw[circle] (300:.125) -- (300:.875);
\end{tikzpicture}
\end{center}
Writing $L = {\bf L}(F)$, there is a natural homomorphism of $L$-groups ${}^{\rm L}\iota : {}^{\rm L}G \rightarrow {}^{\rm L}L$, and a corresponding homomorphism $\iota : {\bf L} \rightarrow {\bf G}$. We use the Langlands-Shahidi method (see \cite{Sh1990} and \cite{LS}), applied to the pair $({\bf H},{\bf L})$ to produce $\gamma$-factors for $G$. More precisely, the adjoint action of ${}^{\rm L}L$ on the Lie algebra of the unipotent radical of the parabolic ${}^{\rm L}P$ of ${}^{\rm L}H$ with Levi ${}^{\rm L}L$ produces an $8$-dimensional representation $r_\mathcal{A}$ of ${}^{\rm L}L$. The Langlands-Shahidi method gives a $\gamma$-factor $\gamma(s,\tau,r_\mathcal{A},\psi)$ for any irreducible smooth generic representation $\tau$ of $L$. If $\pi$ is an irreducible smooth generic representation of $G$, we define Asai cube $\gamma$-factors for $\pi$ by setting
\begin{equation}\label{Asai:gamma:def}
   \gamma(s,\pi,{}^\otimes{\rm I},\psi) \mathrel{\mathop:}= \gamma(s,\tau,r_\mathcal{A},\psi),
\end{equation}
where $\tau$ is a (generic) irreducible component of $\pi \circ \iota$.

\subsection{} For ${\rm GL}_2$, the local Langlands correspondence was established in \cite{Ku}, see also the more detailed account of \cite{BuHe}. Let $\pi$ be as above, and let $\sigma$ be the corresponding degree $2$ representation of the Weil-Deligne group $\mathcal{W}_E'$ of $E$. Then, $\mathcal{W}_E'$ can be seen as an index $3$ subgroup of $\mathcal{W}_F'$, and we let ${}^\otimes{\rm I}(\sigma)$ be the $8$-dimensional representation of $\mathcal{W}_F'$ obtained from $\sigma$ by tensor induction from $\mathcal{W}_E'$ to $\mathcal{W}_F'$. The main equality in this case takes the form:
\begin{equation}\label{maineq:Asai}
   \gamma(s,\pi,{}^\otimes{\rm I},\psi) = \gamma(s,{}^\otimes{\rm I}(\sigma),\psi).
\end{equation}
We establish it in \S~\ref{proof:GL2class}. Prasad and Schulze-Pillot posed the question of equality between Langlands-Shahidi $\varepsilon$-factors with those on the Galois side \cite{PrSP2008}; in \S~\ref{consequences} we extend equality \eqref{maineq:Asai} to non-generic $\pi$ and derive equality for $L$-functions and $\varepsilon$-factors:
\begin{align}\label{maineq:L3}
   L(s,\pi,{}^\otimes{\rm I}) &= L(s,{}^\otimes{\rm I}(\sigma)) \\ \nonumber
   \varepsilon(s,\pi,{}^\otimes{\rm I},\psi) &= \varepsilon(s,{}^\otimes{\rm I}(\sigma),\psi).
\end{align}
As a consequence of \eqref{maineq:Asai}, we also prove stability of Asai cube $\gamma$-factors after twisting by a suitably highly ramified character in \S~\ref{stability:prop}.

\subsection{Remark} An analogous $\gamma$-factor can be defined via the integral representation of Garrett \cite{Ga1987}, and also Piatetski-Shapiro and Rallis \cite{PSRa1987}. This approach can be very helpful in determining the location of poles and non-zero regions, see the work of Ikeda, e.g. \cite{Ik1992}. We do not address the question of equality of those factors with ours.

\subsection{} The proof of  \eqref{maineq:Asai} is local-global in nature, and its principle is well known: by the multiplicativity property of $\gamma$-factors, one reduces to the case where $\pi$ is supercuspidal. And if so, one uses instances where the global Langlands correspondence for ${\rm GL}_2$ has been established \cite{JaLa, La, Tu1978} --more details in \S~\ref{sketch:proof} and a full proof in \S~\ref{proof:GL2class}.

The idea of a local to global argument goes back to the early days of class field theory; in the context of Langlands correspondences it has been used repeatedly since the 1970's. In situations closely related to ours, it was used by H. Kim in \cite{Ki2002} to establish the main equality \eqref{maineq} for a triple product of representations of ${\rm GL}_2(F)$, and by M. Krishnamurthy in \cite{Kr2003} for a product $\pi \times \Pi$, where $\pi$ is a representation of ${\rm GL}_2(F)$ and $\Pi$ of ${\rm GL}_2(E)$, $E/F$ quadratic. Specifically, we use the remark by Weil that local Galois groups are solvable \cite{We1974}; that idea was used again by Ramakrishnan in \cite{Ra2000}. A local-global method was used when $F$ has positive characteristic in recent work of the authors \cite{GaLo, HeLo2011, HeLo2013a, HeLo2013b}.

\subsection{} In the global situation, we use a cubic separable extension $l/k$ of global fields, and we have to consider the algebra $k_v \otimes_k l$ for all places $v$ of $k$. Since such an algebra is not necessarily a field, we are forced to consider other $\gamma$-factors than the Asai cube ones, in particular, those of \cite{Ki2002} and \cite{Kr2003} mentioned above. Rather than relying on those specific instances, however, we prove in one swoop a more general result. Namely, Theorem~\ref{GL2class:thm} below, which illustrates the full extent of the method.

We say that a connected reductive group $\bf G$ is of ${\rm GL}_n$-\emph{kind} if ${\bf G}$ is isomorphic to a product ${\bf G}_1 \times \cdots \times {\bf G}_t$, where for each $i$ we have ${\bf G}_i = {\rm Res}_{l_i/k}{\rm GL}_{n_i}$, $n_i \leq n$. Here, ${\rm Res}_{l_i/k}{\rm GL}_{n_i}$ denotes restriction of scalars with respect to a separable extension $l_i$ over a base field $k$.

The Langlands-Shahidi method applies to a pair of quasi-split connected reductive groups $({\bf H},{\bf L})$ such that there is a parabolic subgroup $\bf P$ of the ambient group $\bf H$, with Levi component $\bf L$. The unipotent radical of $\bf P$ is denoted by $\bf N$ and has Lie algebra $\mathfrak{n}$. Let $\rho$ be an irreducible component of the adjoint representation of ${}^{\rm L}L$ on ${}^{\rm L}\mathfrak{n}$. We then say that a representation $r$ of ${}^{\rm L}G$ is an \emph{LS-representation} if there is such a pair $({\bf H},{\bf L})$ and an $L$-map ${}^{\rm L}\iota : {}^{\rm L}{\bf G} \rightarrow {}^{\rm L}{\bf L}$, inducing a separable isogeny on derived groups, such that $r = \rho \circ {}^{\rm L}\iota$. Then, the Langlands-Shahidi method attaches a $\gamma$-factor $\gamma(s,\pi,r,\psi)$ to any generic irreducible smooth representation $\pi$ of $G$ --the $\gamma$-factor depends only on $r$, not on the way $r$ is written $\rho \circ {}^{\rm L}\iota$ for some $\iota$.

\begin{theorem}\label{GL2class:thm}
Let $r$ be an LS-representation of ${}^{\rm L}G$ for a group $\bf G$ of ${\rm GL}_2$-kind. Let $\pi$ be a smooth irreducible generic representation of $G$, and let $\phi_\pi$ be its Langlands parameter. Let $\psi : F \rightarrow \mathbb{C}^\times$ be a non-trivial character. Then
\begin{equation*}
   \gamma(s,\pi,r,\psi) = \gamma(s,r \circ \phi_\pi,\psi).
\end{equation*}
\end{theorem}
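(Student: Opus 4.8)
The plan is to prove Theorem~\ref{GL2class:thm} by a local-to-global argument, reducing first to the supercuspidal case via multiplicativity and then using a well-chosen global realization together with the known cases of the global Langlands correspondence for ${\rm GL}_2$. The first step is a series of reductions. By the multiplicativity property of Langlands--Shahidi $\gamma$-factors and the corresponding multiplicativity of Langlands--Deligne $\gamma$-factors (applied to the components of $r \circ \phi_\pi$ obtained from the parabolic induction data), one reduces to the case where each factor $\pi_i$ of $\pi = \pi_1 \otimes \cdots \otimes \pi_t$ is supercuspidal; and since a group of ${\rm GL}_2$-kind is a product, one reduces to the case $t=1$, i.e. $G = {\rm GL}_1(l)$ or ${\rm GL}_2(l)$ for a separable extension $l/F$. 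The ${\rm GL}_1$ case is essentially the Langlands--Deligne theory of abelian $\varepsilon$- and $\gamma$-factors and is handled directly, so the heart of the matter is $\pi$ supercuspidal on ${\rm GL}_2(l)$. A further standard reduction uses stability of $\gamma$-factors under highly ramified twists on both sides, allowing one to replace $\pi$ by a twist $\pi \otimes \chi$ with $\chi$ highly ramified; this is needed to arrange that the global lift exists and is cuspidal.

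The second, main step is the globalization. I would choose a global field $k$ with a place $v_0$ such that $k_{v_0} = F$, a separable extension $l/k$ realizing $l/F$ at $v_0$, and a cuspidal automorphic representation $\Pi$ of ${\rm GL}_2(\mathbb{A}_l)$ whose local component at (the place of $l$ above) $v_0$ is the given $\pi$ (or its twist), with the components at all other ramified places controlled -- e.g. chosen to be twists of Steinberg or supercuspidals of a shape for which the local correspondence and the local identity \eqref{maineq} are already known, or unramified. Such globalizations are available by now-standard techniques (Poincaré series / trace formula simple-type arguments, as in the references \cite{Ki2002}, \cite{Kr2003}, \cite{Ra2000}). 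One then needs the ambient group $\bf H$ and the $L$-map ${}^{\rm L}\iota$ to also be defined over $k$, which is automatic since the combinatorial data $(\bf H, \bf L, {}^{\rm L}\iota)$ defining the LS-representation $r$ descends to $k$; so we obtain a global LS-situation and a global $L$-function $L(s,\Pi,r)$ with a functional equation whose $\gamma$-factor is the product over all places of the local Langlands--Shahidi $\gamma$-factors $\gamma(s,\Pi_w, r_w,\psi_w)$.

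The third step is to run the functional equation against the Galois side. Since the global Langlands correspondence for ${\rm GL}_2$ over the number field (or function field) $l$ is known in the relevant cases \cite{JaLa, La, Tu1978}, $\Pi$ corresponds to a global Galois (or automorphic-to-Galois) parameter $\Sigma: \mathcal{W}_l' \to {\rm GL}_2(\mathbb{C})$ (suitably interpreted), and $r \circ \Sigma$ (built from $\Sigma$ via the same recipe that builds $r$, e.g. tensor induction in the Asai cube case) is a global Weil--Deligne representation whose $L$-function satisfies the Artin--Deligne functional equation with $\gamma$-factor $\prod_w \gamma(s, (r\circ\Sigma)_w, \psi_w)$. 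By \eqref{maineq} at every place $w \neq v_0$ -- which holds because at those places $\Pi_w$ is unramified, or Steinberg-type, or of a shape we arranged to be already known, and because the local Langlands--Shahidi and Langlands--Deligne $\gamma$-factors of unramified data agree by a direct computation -- the two global functional equations force equality of the remaining local factors at $v_0$: $\gamma(s,\pi,r,\psi) = \gamma(s, r\circ\phi_\pi,\psi)$. Finally one undoes the highly ramified twist using stability on both sides.

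The main obstacle I expect is the globalization step together with the bookkeeping at the auxiliary bad places: one must simultaneously (i) globalize $l/F$ and $\pi$ into a cuspidal $\Pi$, (ii) keep the local components at all other places in a class where \eqref{maineq} is already known (which is why the highly-ramified-twist stability and the freedom to prescribe Steinberg or unramified behavior are essential), and (iii) ensure the global LS-datum $(\bf H,\bf L,{}^{\rm L}\iota)$ and the Galois-side construction $r\circ\Sigma$ are genuinely the ``same recipe'' as $r\circ\phi_\pi$ locally, so that matching global functional equations actually yields the local identity. Verifying the unramified matching $\gamma(s,\Pi_w,r_w,\psi_w) = \gamma(s,(r\circ\Sigma)_w,\psi_w)$ at almost all places is routine (Satake parameters vs. Frobenius eigenvalues), but assembling everything so that exactly one place is left uncontrolled, and handling the ${\rm GL}_1$ contributions and the general (non-field) algebra case uniformly, is where the real work lies.
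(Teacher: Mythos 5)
Your overall strategy (reduce to supercuspidal by multiplicativity, globalize, compare functional equations, induct at auxiliary places) is the right general shape and matches the paper's plan, but there is a genuine gap at the heart of your globalization step, and it is exactly the obstacle the paper flags as the main difficulty.

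You globalize \emph{from the automorphic side}: you pick a cuspidal $\Pi$ on ${\rm GL}_2(\mathbb{A}_l)$ with $\Pi_{v_0}\simeq\pi$ (or a twist) and controlled ramification elsewhere, and then you ``use the global Langlands correspondence for $\Pi$'' to get a global Galois parameter $\Sigma$ and run the Artin--Deligne functional equation for $r\circ\Sigma$. Over a number field this step does not go through: the global correspondence for ${\rm GL}_2$ is \emph{not} known for an arbitrary cuspidal $\Pi$ obtained by a Poincar\'e-series or trace-formula globalization, and even when a Galois object exists it is $\ell$-adic/motivic rather than a complex Weil-group representation, so there is no Artin--Deligne functional equation to compare with. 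The paper resolves this by running the globalization in the \emph{opposite} direction: since $\pi$ is supercuspidal, its parameter $\sigma$ has finite projective image (dihedral, $A_4$, or $S_4$); one lifts $\sigma$ to a complex Weil-group representation $\Sigma$ of $\mathcal{W}_l$ with the same projective image and controlled ramification (via Lemma~\ref{GF:lemma}, Lemma~\ref{GL1:glob}, and, in residue characteristic $2$, Gabber--Katz), and \emph{then} invokes Jacquet--Langlands, Langlands, and Tunnell \cite{JaLa,La,Tu1978} to produce the corresponding cuspidal $\Pi$. Only in this direction is the pair $(\Pi,\Sigma)$ guaranteed to exist with both functional equations available. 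Your auxiliary-place control (Steinberg/unramified) is also not what this construction yields: one gets, at finite $v\neq v_0$, that $\Sigma_{i,v}$ is unramified, reducible, or dihedral, and the induction uses precisely those cases.

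Two further points. First, your appeal to ``stability under highly ramified twists'' as a reduction is problematic: in the paper, stability (Property~(ix)) is a \emph{consequence} of Theorem~\ref{GL2class:thm} (via Deligne--Henniart), not an input, so using it at the outset risks circularity; in any case the Galois-side globalization makes it unnecessary. Second, you do not address how to pass from $\psi$ a component of a global additive character to arbitrary $\psi$; the paper treats this with a density argument comparing Properties~(iii) and ${\rm (iii)}_\mathscr{G}$ (step~2) of §\ref{123}). These are fixable, but the direction-of-globalization issue is essential and, as written, your proof would not compile into a complete argument over number fields.
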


Equality \eqref{maineq:Asai} is a special case. The proof of Theorem~\ref{GL2class:thm} occurs in \S~\ref{proof:GL2class}; its application to the Asai cube factors is found in \S~\ref{AsaiL}.

\subsection{Remark} Compatibility with the local Langlands correspondence, i.e. equality \eqref{maineq}, can be stated for all LS-representations of groups of ${\rm GL}_n$-kind, since the local Langlands correspondence is known for ${\rm GL}_n$ \cite{La1973, LaRaSt1993, HaTa2001, He2000, Sc2013}. Equality \eqref{maineq} holds for the Rankin-Selberg $\gamma$-factors $\gamma(s,\pi_1\times\pi_2,\psi)$, essentially by definition of the local Langlands correspondence. Another case occurs when $G = {\rm GL}_n(F)$ and $r$ is the exterior square or the symmetric square representation of ${}^{\rm L}G$. In that case, for a local field $F$ of characteristic zero, \eqref{maineq} was proved up to a root of unity in \cite{He2010} and equality in \cite{CoShTs2017}; when $F$ has positive characteristic, exact equality was proved in \cite{HeLo2011} (and twisted exterior square or symmetric square $\gamma$-factors, for $G={\rm GL}_n(F) \times F^\times$, were treated in \cite{GaLo2015}). Sill another case is when $G = {\rm GL}_n(E)$, $E/F$ quadratic separable, and $r$ gives quadratic Asai $\gamma$-factors: for $F$ of characteristic zero, see \cite{He2010} for equality up to a root of unity and work in progress of D. Shankman, adapting \cite{CoShTs2017} for true equality; for $F$ of characteristic $p$, \cite{HeLo2013b} establishes equality \eqref{maineq}.

In fact, when $F$ has positive characteristic, the method of \cite{HeLo2011,HeLo2013a,HeLo2013b} was generalized by Gan and Lomel\'i in \cite{GaLo}, yielding \eqref{maineq} for LS representations of ${\rm GL}_n$-type; so when $F$ has positive characteristic, Theorem~\ref{GL2class:thm} is in fact a consequence of \cite{GaLo}. Nonetheless, in \S~\ref{p>0} we indicate how to adapt \cite{HeLo2011} and \cite{HeLo2013b} to the present situation, as it gives a more direct proof than \cite{GaLo}.

\subsection{}\label{sketch:proof} For the benefit of the reader, we now explain the local-global argument in the proof of \eqref{maineq:Asai} (for the Asai cube factors), when $F$ has characteristic zero. The general case of Theorem~\ref{GL2class:thm} is fully proved in \S~\ref{proof:GL2class}, using a similar pattern.

Let $k$ be a number field. The principal obstacle is the lack of a complete Langlands correspondence for ${\rm GL}(2)$ over $k$, so we need to use the partial results available in that direction. More precisely, as in the positive characteristic case, we use a list of properties of the $\gamma$-factors $\gamma(s,\pi,r,\psi)$. All but one of these properties are local, and the corresponding properties for $\gamma(s,r \circ \phi_\pi,\psi)$ are easy to prove. One of these properties, namely multiplicativity, concerns the case where $\pi$ is a component of a representation ${\rm Ind}_P^G \rho$ induced from a proper parabolic subgroup of $G$. Multiplicativity and an inductive argument help us reduce the proof of \eqref{maineq:Asai} to the case where $\pi$ is cuspidal.

To treat the cuspidal case, our tool is the global functional equation, which we now describe in the Asai cube case. Let $l$ be a cubic extension of $k$, and $\Pi$ a cuspidal automorphic representation of ${\bf G}(\mathbb{A}_k) = {\rm GL}_2(\mathbb{A}_l)$ where ${\bf G} = {\rm Res}_{l/k}{\rm GL}_2$. Then for each place $v$ of $k$, $\Pi_v$ is a generic irreducible representation of $G_v = {\bf G}(k_v) = {\rm GL}_2(k_v \otimes_k l)$, so that the Langlands-Shahidi method produces $\gamma$-factors $\gamma(s,\Pi_v,{}^\otimes{\rm I},\Psi_v)$ for any non-trivial character $\Psi$ of $k \backslash \mathbb{A}_k$. The global functional equation is
\begin{equation}\label{globalFE}
   L^S(s,\pi,{}^\otimes{\rm I}) = \prod_{v \in S} 
   \gamma(s,\Pi_v,{}^\otimes{\rm I},\Psi_v) 
   L^S(1-s,\widetilde{\Pi},{}^\otimes{\rm I}),
\end{equation}
where $S$ is a finite set of places of $k$, containing the infinite ones, such that $l/k$, $\Pi$ and $\Psi$ are unramified for $v \notin S$, and $L^S$ denotes the product of (unramified) local $L$-functions ranging over all $v\notin S$ --which by construction will be compatible with Galois $L$-functions. Also, $\widetilde{\Pi}$ is the representation contragredient to $\Pi$.

The proof of \eqref{maineq:Asai} then proceeds as follows for $\pi$ cuspidal. We choose a cubic extension of number fields $l/k$, with a place $v_0$ giving the local extension $E/F$, and a cuspidal automorphic representation $\Pi$ as above, giving $\pi$ at $v_0$. We assume that $\psi$ is the component at $v_0$ of a global additive character $\Psi$ (local properties later allow us to derive the case where $\psi$ is arbitrary). We apply the functional equation to $\Pi$. The set $S$ not only has to contain $v_0$, but we also need to adjust the global situation so that at places in $S$, different from $v_0$, equality \eqref{maineq:Asai} is valid. The case of Archimedean places comes from Shahidi \cite{Sh1985,Sh1990}, but at a finite place $v \in S \setminus \left\{ v_0 \right\}$ we need to assume that $\Pi_v$ is somewhat less complicated than $\pi$, for example, $\Pi_v$ not cuspidal or a level zero cuspidal. As mentioned before, that procedure necessarily yields situations which are not giving rise to Asai cube factors. For example, at a finite place distinct from $v_0$ the extension $l/k$ may be split or partially split; the former case requires a triple Rankin-Selberg product and the latter a quadratic Asai $\gamma$-factor. This explains why we consider the more general result of Theorem~\ref{GL2class:thm}, which also has intrinsic interest.

The main problem, however, is to get a corresponding functional equation on the Galois side. While over function fields the functional equation for $L$-functions of $\ell$-adic Galois representations is a consequence of results of Deligne and Laumon, over number fields it is available only for complex representations of Weil groups. Only if $\Pi$ corresponds to a complex representation $\Sigma$ of the Weil group $\mathcal{W}_l$ of $l$, can we compare the functional equation \eqref{globalFE} with the one for $\Sigma$:
\begin{equation}\label{galoisFE}
   L^S(s,{}^\otimes{\rm I}(\Sigma)) = \prod_{v \in S} \gamma(s,{}^\otimes{\rm I}(\Sigma_v),\Psi_v) L^S(1-s,{}^\otimes{\rm I}(\widetilde{\Sigma})).
\end{equation}
The comparison is done term by term: the $L$-functions in \eqref{globalFE} and \eqref{galoisFE} are the same because $\Pi$ corresponds to $\Sigma$, and the $\gamma$-factors at places in $S$ distinct from $v_0$ are the same by the inductive argument alluded to above. Equality \eqref{maineq:Asai} for $\pi$ then follows.

Our way to produce corresponding $\Pi$ and $\Sigma$ is to start from the Weil group side, with $l/k$ chosen as mentioned above giving $E/F$ at $v_0$. The representation $\sigma$, corresponding to $\pi$ cuspidal, is irreducible, hence has finite image in the projective linear group; this image is isomorphic to $A_4$ or $S_4$, or is dihedral. In each case, we produce a representation $\Sigma$ of $\mathcal{W}_l$, yielding $\sigma$ at the place $v_0$, and with the same image as $\sigma$ in the projective linear group. By the strong Artin conjecture of Langlands and Tunnell \cite{La,Tu1981}, there is an automorphic cuspidal representation $\Pi$ of ${\rm GL}_2(\mathbb{A}_l)$ corresponding to $\Sigma$, and we can proceed as sketched.

\subsection*{Acknowledgments} The authors would like to thank D. Prasad and F. Shahidi for mathematical discussions. The first author thanks his home institutions, Universit\'e Paris-Saclay and CNRS; in addition to Pontificia Universidad Cat\'olica de Valpara\'iso for a visit (supported by MEC Grant 80170039) where a second version of the paper was written. The second author would like to thank IH\'ES and Universit\'e d'Orsay for their hospitality during visits in 2016; in addition to IISER, Pune and TIFR, Mumbay during January and February 2017; he was supported in part by Project VRIEA/PUCV 039.367 and FONDECYT Grant 1171583.

\section{LS method and ${\rm GL}(2)$}\label{LSmethod}

For the moment, let $F$ denote either a local field or a global one.  We write $\mathfrak{p}$ to denote the characteristic of $F$, where we allow $\mathfrak{p}$ to be either a prime $p$ or $0$. Fix $\mathfrak{p}$. As mentioned in the introduction, we say that a connected reductive group ${\bf G}/F$ is of ${\rm GL}_n$-\emph{kind} if ${\bf G}$ is isomorphic to a product ${\bf G}_1 \times \cdots \times {\bf G}_t$, where we have that each ${\bf G}_i$ is a group ${\rm Res}_{E_i/F}{\rm GL}_{n_i}$, $n_i \leq n$, obtained via restriction of scalars with respect to a separable extension $E_i$ over the base field $F$. Given an algebraic group $\bf H$ defined over $F$, we write $H={\bf H}(F)$. We write ${\bf Z}_{\bf H}$ to denote the center of $\bf H$.

We let $\bf H$ be an ambient group that is quasi-split connected reductive defined over $F$, together with a subgroup $\bf L$ which is the Levi component of a maximal parabolic subgroup $\bf P$ of $\bf H$. Let $\bf N$ be the unipotent radical of $\bf P$ with Lie algebra $\mathfrak{n}$. The Langlands-Shahidi method \cite{Sh1990,LS}, applies to a representation $\rho$ which is an irreducible component of the adjoint representation of ${}^{\rm L}L$ on ${}^{\rm L}\mathfrak{n}$. We work with an extended version of these representations for groups $\bf G$ of ${\rm GL}_2$-kind.

An \emph{LS-representation} $r$ of ${}^LG$ is one obtained via a pair $({\bf H},{\bf L})$ of quasi-split connected reductive groups and a representation $\rho$ as above, together with a map of L-groups ${}^{\rm L}\iota : {}^{\rm L}{\bf G} \rightarrow {}^{\rm L}{\bf L}$, inducing a separable isogeny on derived groups, and such that $r = \rho \circ {}^{\rm L}\iota$.

\subsection{Notation} From \S~\ref{LSmethod} to \S~\ref{consequences} of the article, we let $F$ denote a local field and $k$ a global one.

Let $\mathscr{A}_n(\mathfrak{p})$ be the class of tuples $(F,\psi,{\bf G},\pi,r)$ where $F$ is a locally compact field of characteristic $\mathfrak{p}$, $\psi$ is a non-trivial additive character of $F$, a quasi-split reductive group $\bf G$ of ${\rm GL}_n$-kind, $\pi$ a smooth irreducible representation $\pi$ of $G = {\bf G}(F)$, and $r$ an LS-representation of ${}^{\rm L}G$. We say that a tuple $(F,\psi,{\bf G},\pi,r) \in \mathscr{A}_n$ is generic if $\pi$ is generic.

If $\pi$ is a smooth irreducible representation of $G$, we can adapt Silberger's reasoning \cite{Si1979}, in order to show that $\pi$ reduces in $L$ to a sum of finitely many irreducibles. Then, the Langlands-Shahidi method attaches a $\gamma$-factor
\begin{equation}\label{general:def}
   \gamma(s,\pi,r,\psi) \mathrel{\mathop:}= \gamma(s,\tau,r,\psi) \text{ for } (F,\psi,{\bf G},\pi,\psi) \in \mathscr{A}_2(\mathfrak{p}),
\end{equation}
where $\tau$ is the (unique) generic component of the restricition of $\pi$ to $L$.

We also let $\mathscr{A}_n^{\rm glob}(\mathfrak{p})$ be the class of tuples $(k,\Psi,{\bf G},\Pi,r,S)$ where $k$ is a global field of characteristic $\mathfrak{p}$, $\Psi$ a non-trivial character of $\mathbb{A}_k/k$ (note that $\Psi_v$ is unramified almost everywhere), $\bf G$ a quasi-split reductive group over $k$ of ${\rm GL}_n$-kind, $\Pi$ a cuspidal automorphic representation of ${\bf G}(\mathbb{A}_k)$ (we recall that such representations are globally generic), $r$ is an LS-representation of ${}^{\rm L}G$, and $S$ a finite set of places of $k$ containing the Archimedean ones, such that ${\bf G}_v$, $\Pi_v$ and $\Psi_v$ are unramified for $v \notin S$. Clearly such a tuple gives, for each place $v$ of $k$, a local tuple $(k_v,\Psi_v,{\bf G}_v,\Pi_v,r_v)$ where $r_v$ is obtained by composing $r$ with the natural map ${}^{\rm L}G_v \rightarrow {}^{\rm L}G_k$.

\subsection{Properties of LS $\gamma$-factors}\label{LS:axioms}

\begin{enumerate}
\item[(i)] \emph{(Naturality).} Let $(F,\psi,{\bf G},\pi,r) \in \mathscr{A}_2(\mathfrak{p})$ be generic. Let $\eta: F' \rightarrow F$ be an isomorphism extending to $G' = {\bf G}(F') \cong {\bf G}(F) = G$. Let $(F',\psi',{\bf G},\pi',r) \in \mathscr{A}_2(\mathfrak{p})$ be the tuple obtained via $\eta$. Then
   \begin{equation*}
      \gamma(s,\pi,r,\psi) = \gamma(s,\pi',r,\psi').
   \end{equation*}
\item[(ii)] \emph{(Isomorphism)}. Let $(F,\psi,{\bf G},\pi_j,r) \in \mathscr{A}_2(\mathfrak{p})$, $j=1$, $2$, be generic. If $\pi_1 \cong \pi_2$, then
   \begin{equation*}
      \gamma(s,\pi_1,r,\psi) = \gamma(s,\pi_2,r,\psi).
   \end{equation*}
\item[(iii)] \emph{(Dependence on $\psi$).} Let $(F,\psi,{\bf G},\pi,r) \in \mathscr{A}_2(\mathfrak{p})$ be generic. Given $a \in F^\times$, let $\psi^a : F \rightarrow \mathbb{C}^\times$ be the character given by $\psi^a(x) = \psi(ax)$. Then there is a rational character $z: F^\times \rightarrow Z_G$, $a \mapsto z(a) \in Z_G$, such that
   \begin{equation*}
      \gamma(s,\pi,r,\psi^a) = \omega_{\pi}(z(a)) \left| a \right|_F^{\dim(r)(s-\frac{1}{2})} \cdot \gamma(s,\pi,r,\psi).
   \end{equation*}
\item[(iv)] \emph{(Multiplicativity).} Let $(F,\psi,{\bf G},\pi,r) \in \mathscr{A}_2(\mathfrak{p})$ be generic. Let $\bf P$ be a parabolic subgroup of $\bf G$, with Levi subgroup $\bf M$, and assume that $\pi$ is the unique generic component of
    \begin{equation*}
      {\rm ind}_P^G \rho,
   \end{equation*}
where $\rho$ is a generic smooth representation of $M$. Then $\bf M$ is of ${\rm GL}_2$-kind, the restriction of $r$ to ${}^{\rm L}M$  is a sum of LS representations $r_j$, $j \in \mathscr{J}$, and
\begin{align*}
   \gamma(s,\pi,r,\psi) = \prod_{j \in \mathscr{J}} \gamma(s,\rho,r_j,\psi).
\end{align*}
\item[(v)] \emph{(Compatibility at representations with Iwahori fixed vectors).} Assume $F$ is non-archimedean and let $(F,\psi,{\bf G},\pi,r) \in \mathscr{A}_2(\mathfrak{p})$ be generic, such that $\pi$ has fixed vectors under Iwahori subgroups of $G$. Let $\phi_\pi : \mathcal{W}_F' \rightarrow {}^{\rm L}G$ be the Langlands parameter corresponding to $\pi$. Then
   \begin{equation*}
      \gamma(s,\pi,r,\psi) = \gamma(s,r \circ \phi_\pi,\psi).
   \end{equation*}
\item[(vi)] \emph{(Compatibility at archimedean places).} Assume that $F$ is Archimedean and such that $(F,\psi,{\bf G},\pi,r) \in \mathscr{A}_2(\mathfrak{p})$ is generic. Let $\phi_\pi : \mathcal{W}_F \rightarrow {}^{\rm L}G$ be the Langlands parameter corresponding to $\pi$. Then
   \begin{equation*}
      \gamma(s,\pi,r,\psi) = \gamma(s,r \circ \phi_\pi,\psi).
   \end{equation*}
\end{enumerate}

Compatibility at archimedean places is the subject of \cite{Sh1985}. There is one global property that connects the local theory of $\gamma$-factors with partial $L$-functions defined by
\begin{equation*}
   L^S(s,\Pi,r) = \prod_{v \notin S} L(s,\Pi_v,r_v),
\end{equation*}
for $(k,\Psi,{\bf G},\Pi,r,S) \in \mathscr{A}_2(\mathfrak{p})^{\rm glob}$.

\begin{enumerate}
\item[(vii)] \emph{(Functional equation).} Let $(k,\Psi,{\bf G},\Pi,r,S) \in \mathscr{A}_2(\mathfrak{p})^{\rm glob}$. Then
   \begin{equation*}
      L^S(s,\Pi,r) = \prod_{v \in S} \gamma(s,\Pi_v,r,\Psi_v) L^S(1-s,\widetilde{\Pi},r).
   \end{equation*}
\end{enumerate}

In \cite{HeLoFuture} we adapt Theorem~3.5 of \cite{Sh1990} and Theorem~4.1 of \cite{LS}, which give the existence and uniqueness of a system of $\gamma$-factors on $\mathscr{A}_2(\mathfrak{p})$ with Properties (i)--(vii).

\subsection{Properties of the corresponding Galois $\gamma$-factors}\label{Galois:axioms}
Let $(F,\psi,{\bf G},\pi,r)$ be a local tuple, and let $\phi_\pi$ be the $L$-parameter attached to $\pi$ via the local Langlands correspondence. Galois $\gamma$-factors $\gamma(s,r \circ \phi_\pi,\psi)$ possess the analogous properties of naturality ${\rm (i)}_\mathscr{G}$, and isomorphism ${\rm (ii)}_\mathscr{G}$ --in fact, when we talk about ``the'' $L$-parameter attached to $\pi$, we rather mean its equivalence class. Dependence on $\psi$ is, on the outset, more precise than for $\gamma(s,\pi,r,\psi)$:
\begin{equation*}
   {\rm (iii)}_\mathscr{G} \ \gamma(s,r \circ \phi_\pi,\psi^a) = \det(r \circ \phi_\pi)(a) \left| a \right|_F^{\dim(r)(s-\frac{1}{2})} \cdot \gamma(s,r \circ \phi_\pi,\psi),
\end{equation*}
where $\det(r \circ \phi_\pi)$ is seen as a character of $F^\times$ via class field theory.  And, multiplicativity is exactly parallel to the same property for $\gamma(s,\pi,r,\psi)$:

\begin{enumerate}
\item[${\rm (iv)}_\mathscr{G}$] With the hypothesis of Property~${\rm (iv)}$ of \S~\ref{LS:axioms}, the $L$-parameter $\phi_\pi$ factorizes as $\phi_\pi \circ i$, where $i$ is the inclusion of ${}^{\rm L}M$ into ${}^{\rm L}G$, and then
\begin{align*}
   \gamma(s,r \circ \phi_\pi,\psi) = \prod_{j \in \mathscr{J}} \gamma(s,r_j \circ \phi_\pi,\psi).
\end{align*}
\end{enumerate}
Properties ${\rm (v)}$ and ${\rm (vi)}$ of \S~3.2 are already statements about equality between LS and Galois $\gamma$-factors.

For the final property, we consider a global tuple $(k,\Psi,{\bf G},\Pi,r,S)$, and we \emph{assume} that there is a global Weil group $L$-parameter
\begin{equation*}
   \Phi_\Pi: \mathcal{W}_k \rightarrow {}^{\rm L}G,
\end{equation*}
which corresponds to $\Pi$ (at all places); it then satisfies the functional equation
\begin{equation*}
   {\rm (vii)}_\mathscr{G} \
   L^S(s,r \circ \Phi_\Pi) = \prod_{v \in S} \gamma(s,r \circ \Phi_{\Pi_v},\Psi_v) L^S(1-s,r \circ \Phi_{\widetilde{\Pi}}),
\end{equation*}   
where partial $L$-functions are given by
\begin{equation*}
   L^S(s,\Phi_\Pi,r) = \prod_{v \notin S} L(s,\Phi_{\Pi_v},r_v).
\end{equation*}
Our main point in the proof of Theorem~\ref{GL2class:thm} will be to ensure this assumption.

\section{Proof of Theorem~\ref{GL2class:thm}}\label{proof:GL2class}

\subsection{}\label{123} In this section, we prove Theorem~\ref{GL2class:thm} for $\mathfrak{p}=0$ in \S\S~\ref{begin:p0}--\ref{nonD:0}, then for $\mathfrak{p}>0$ in \S~\ref{p>0}. But first, we gather arguments which are the same in both situations. Let $F$ be non-archimedean, and ${\bf G}/F$ a connected reductive group of ${\rm GL}_2$-kind.

\medskip

\noindent{\bf 1)} Let $\pi$ be a smooth irreducible generic representation of $G$. Then there is a parabolic subgroup $\bf P$ of $\bf G$ with Levi subgroup $\bf M$ and a smooth irreducible generic supercuspidal representation $\rho$ of $M$ such that $\pi$ is a component of ${\rm Ind}_P^G(\rho)$. By the multiplicativity properties, \S~\ref{LS:axioms}(iv) and \S~\ref{Galois:axioms}${\rm (iv)}_\mathscr{G}$, if Theorem~\ref{GL2class:thm} is valid for the tuples $(F,\psi,{\bf M},\rho,r_j)$, then it is valid for $(F,\psi,{\bf G},\pi,r)$.

We shall thus reason by induction on $n_G = \dim{\bf G} - \dim{\bf C}$, where $\bf C$ is the maximal $F$-split torus in the center of $\bf G$. If $\bf M$ is a proper Levi subgroup of $\bf G$ then $n_G > n_M$, so to prove Theorem~\ref{GL2class:thm} we may assume that $\pi$ is supercuspidal.

\medskip

\noindent{\bf 2)} Choose a global field $k$ with a place $v$ and an isomorphism $\eta$ of $k_v$ onto $F$. Assume Theorem~\ref{GL2class:thm} is proved for tuples $(F,\psi,{\bf G},\pi,r)$ such that $\psi \circ \eta$ is the component at $v$ of a character of $\mathbb{A}_k/k$. Choose such a tuple, and a character $\Psi$ of $\mathbb{A}_k/k$ with $\Psi_v = \psi \circ \eta$.

For $b \in k^\times$, $\psi^{\eta(b)}\circ\eta$ is the component at $v$ of $\Psi^b : x \mapsto \Psi(bx)$, so we get by our assumption
\begin{equation*}
   \gamma(s,\pi,r,\psi^{\eta(b)}) = \gamma(s,r\circ\phi_\pi,\psi^{\eta(b)}).
\end{equation*}
Then, by applying \S~\ref{LS:axioms}(iii) and \S~\ref{Galois:axioms}${\rm (iii)}_\mathscr{G}$, we conclude that
\begin{equation*}
   \omega_\pi(z(\eta(b)) = \det(r\circ\phi_\pi)(\eta(b)),
\end{equation*}
for $b \in k^\times$ and $z : F^\times \rightarrow Z_G$ the rational character of \S~\ref{LS:axioms}(iii). But both $\omega_\pi \circ z$ and $\det(r \circ \phi_\pi)$ are continuous characters of $F^\times$. Since they are equal on $\eta(k^\times)$, which is dense in $F^\times$, they are equal. Thus, for all $a \in F^\times$ we get
\begin{equation*}
   \gamma(s,\pi,r,\psi^a) = \gamma(s,r\circ\phi_\pi,\psi^a).
\end{equation*}

\noindent{\bf 3)} To apply {\bf 2)} we first need to lift our tuple $(F,\psi,{\bf G},\pi,r)$ to a global one. The following lemma (Lemma~3.6 of \cite{He1983}, Lemma~4.13 of \cite{DeAntwerp}) will also be used later.

\begin{lemma}\label{GF:lemma}
Let $\widetilde{E}/F$ be a finite Galois extension. Then there exist a global field $k$, a finite Galois extension $l/k$, with a place $v_0$ of $k$, and an isomorphism $\eta$ of $k_{v_0}$ onto of $F$ inducing an isomorphism of $k_{v_0} \otimes_k l$ onto $\widetilde{E}$. In particular, $l$ has only one place $w_0$ above $v_0$ and the decomposition subgroup of ${\rm Gal}(l/k)$ at $w_0$ is itself, and identifies with ${\rm Gal}(\widetilde{E}/F)$ via $\eta$.
\end{lemma}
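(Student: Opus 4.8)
The plan is to prove Lemma~\ref{GF:lemma} by a standard globalization argument: realize the local Galois extension $\widetilde{E}/F$ as the completion of a global Galois extension $l/k$ at a single place, where the decomposition group at that place is all of $\mathrm{Gal}(l/k)$. First I would reduce to the case where $F$ has the same characteristic $\mathfrak{p}$ as the global field we seek; so if $\mathfrak{p}=0$ we look for a number field, and if $\mathfrak{p}=p>0$ we look for a function field over $\mathbb{F}_q$ with the appropriate residue characteristic. In either case, pick a uniformizer and use Krasner's lemma: $\widetilde{E}/F$ is generated by a root of some monic polynomial $P(X)\in F[X]$, and any polynomial $\mathbb{F}$-close enough to $P$ generates the same extension. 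Since $F$ is the completion of a global field $k_0$ at a place $v_0$ (take $k_0=\mathbb{Q}$ or $\mathbb{F}_q(T)$ suitably, with $v_0$ chosen so that $(k_0)_{v_0}\cong F$), and $k_0$ is dense in $F$, I can choose $P_0\in k_0[X]$ with coefficients $v_0$-adically close to those of $P$; then $k:=k_0$, $l:=$ the Galois closure over $k$ of $k[X]/(P_0)$ works locally at a place $w_0\mid v_0$, after possibly shrinking to the correct decomposition subfield.

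The key steps, in order, are: (1) Write $F$ as a completion of a global field $k_0$ at a place $v_0$; in characteristic zero use that every finite extension of $\mathbb{Q}_p$ is a completion of a number field, in characteristic $p$ use $\mathbb{F}_q((t))$ is a completion of $\mathbb{F}_q(t)$. (2) Approximate the defining polynomial of $\widetilde{E}$ over $F$ by one over $k_0$ using density and Krasner's lemma, so that the local extension generated at $v_0$ is exactly $\widetilde{E}/F$. (3) Replace $k_0[X]/(P_0)$ by its Galois closure $l$ over $k_0=:k$; then $l\otimes_k k_{v_0}$ is a product of local fields indexed by the places of $l$ above $v_0$, and one of these factors is $\widetilde{E}$. (4) Arrange that there is only \emph{one} such place, i.e. that the decomposition group at $w_0$ is all of $\mathrm{Gal}(l/k)$: since $\mathrm{Gal}(l_{w_0}/k_{v_0})\cong\mathrm{Gal}(\widetilde{E}/F)$ embeds as the decomposition subgroup, after replacing $k$ by the fixed field $k'$ of that decomposition subgroup (so $k'$ is the decomposition subextension, and $v_0$ has a unique place $v_0'$ below $w_0$ in $k'$ with $k'_{v_0'}=F$ still), we get $[\l:k']=[\widetilde{E}:F]$ and the decomposition group becomes the full group. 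Then $\eta:k'_{v_0'}\xrightarrow{\sim}F$ induces $k'_{v_0'}\otimes_{k'}l \xrightarrow{\sim}\widetilde{E}$ as desired; rename $k'$ to $k$.

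I expect the main obstacle to be step (4): ensuring the decomposition group is the whole Galois group requires care, because after passing to the decomposition subfield $k'$ one must verify that $k'$ is still a global field with the right completion at the relevant place and that $l/k'$ is still Galois with $l$ having a \emph{unique} place over it — this is exactly what the decomposition-subfield construction gives by Galois theory of places, but writing it cleanly (especially tracking that $k'_{v_0'}=F$ and not some unramified extension) needs attention. A secondary technical point is the function-field case, where one must start from $\mathbb{F}_q(t)$ with $q$ matching the residue field of $F$ and check that Krasner's lemma and the density argument go through identically; this is routine but should be mentioned. The authors cite Lemma~3.6 of \cite{He1983} and Lemma~4.13 of \cite{DeAntwerp}, so I would note that the argument is by now classical and the references handle both characteristics.
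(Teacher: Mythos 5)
Your proposal is correct and is the standard globalization argument via Krasner's lemma and decomposition subfields, which is essentially what the cited references \cite{He1983} and \cite{DeAntwerp} do; the paper itself states the lemma without proof.

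One point worth making explicit in step (3): a priori the Galois closure $l$ of $k_0[X]/(P_0)$ over $k_0$ could produce local fields \emph{strictly larger} than $\widetilde{E}$ at places over $v_0$, which would spoil the conclusion even after passing to the decomposition field in step (4), since the decomposition-field construction only controls the ground field, not the top one. What saves you is the Galois hypothesis on $\widetilde{E}/F$: since $P$ already splits in $\widetilde{E}$, Krasner's lemma ensures that $P_0$ also splits completely in $\widetilde{E}$ once $P_0$ is close enough to $P$. Hence all the roots of $P_0$, which generate $l$ over $k_0$, lie in $\widetilde{E}$ upon completion at any place above $v_0$, so every local factor of $l\otimes_{k_0}(k_0)_{v_0}$ is exactly $\widetilde{E}$ (it contains $\widetilde{E}$ because it contains a completion of $k_0[X]/(P_0)$, and it is contained in $\widetilde{E}$ by the splitting). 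With that filled in, step (4) — pass to the fixed field $k'$ of $D(w_0/v_0)$, use that $e(v_0'/v_0)=f(v_0'/v_0)=1$ so $k'_{v_0'}\cong F$, that $\mathrm{Gal}(l/k')=D(w_0/v_0)$ is the full decomposition group so $w_0$ is the unique place above $v_0'$, and that $l_{w_0}=\widetilde{E}$ — correctly completes the proof.
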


\begin{remark}\label{rmk:0p} {\rm (i)} Assume that $F$ is a finite extension of $\mathbb{Q}_p$ (hence $\mathfrak{p}=0$). Applying the above procedure to an $\widetilde{E}$ which is Galois over $\mathbb{Q}_p$, we find a number field $l$ with only one place above $p$. {\rm (ii)} When $F$ has characteristic $2$ (so $\mathfrak{p}=2$), we shall in fact use a stronger result of Gabber-Katz \cite{Ka1986}.
\end{remark}

Using Lemma~\ref{GF:lemma}, we can lift ${\bf G}/F$ and its LS representation $r$ to a number field $k$. Let us briefly provide the argument for this (it applies to a general situation, not necessarily of ${\rm GL}_2$-kind \cite{HeLoFuture}). Recall that $r = \rho \circ {}^{\rm L}\iota$ has an underlying pair of connected quasi-split reductive groups $({\bf H},{\bf L})$ defined over $F$, together with an L-group homomorphism ${}^{\rm L}\iota : {}^{\rm L}G \rightarrow {}^{\rm L}L$.

We choose a large enough finite Galois extension $\widetilde{E}$ of $F$ in $\overline{F}$, such that $\bf G$, $\bf H$ and $\bf L$ are split over $\widetilde{E}$. The action of $\mathcal{W}_F$ on the based root data of the (pinned) groups $\bf G$, $\bf H$, $\bf L$ factors through ${\rm Gal}(\widetilde{E}/F)$. Using the lemma we choose a number field $k$, and extension $l$, getting at the place $v_0$ an isomorphism of ${\rm Gal}(l/k)$ onto ${\rm Gal}(\widetilde{E}/F)$; in particular, we obtain based root data with an action of ${\rm Gal}(l/k)$, determining (uniquely up to unique isomorphism) pinned groups ${\bf G}_k$, ${\bf H}_k$, ${\bf L}_k$ together with an L-homomorphism ${}^{\rm L}\iota_k : {}^{\rm L}G_k \rightarrow {}^{\rm L}L_k$. As the action of ${}^{\rm L}L$ on ${}^{\rm L}\mathfrak{n}$ can be seen on the root datum, $\rho$ corresponds to an irreducible representation $\rho_k$ of ${}^{\rm L}L_k$ and we put $r_k = \rho_k \circ {}^{\rm L}\iota_k$. At the place $v_0$, the isomorphism $l_{v_0} \simeq \widetilde{E}$ gives unique isomorphisms of pinned groups $G_k \otimes k_{v_0} \simeq G$, compatible with the based root data, and similarly for $H$ and $L$. On the L-group side, if we choose a separable algebraic closure $\bar{k}$ of $k$ containing $l$, we get an L-group ${}^{\rm L}G_k$, and an embedding ${}^{\rm L}G \rightarrow {}^{\rm L}G_k$ at the place $v_0$, and similarly for ${}^{\rm L}H$ and ${}^{\rm L}L$. Composing $r_k$ with the embedding ${}^{\rm L}G \rightarrow {}^{\rm L}G_k$, we retrieve $r$ back.

Of course, to only lift the group ${\bf G}/F$ of ${\rm GL}_2$-type, it is easier. Indeed, if $\bf G$ is the product of ${\rm Res}_{E_i/F}{\rm GL}_{n_i}$, then $\widetilde{E}$ contains the $E_i$'s, and we can take ${\bf G}_k$ to be the product of ${\rm Res}_{l_i/k}{\rm GL}_{n_i}$, where $l_i$ is the sub-extension of $l$ corresponding to $E_i$.

\subsection{}\label{lemma2:ss} Before continuing, we shall need another well-known lemma. We include a proof for the convenience of the reader, where we use the following notation for a global field $k$: {\bf (i)} if $\mathfrak{p}=0$, interpret $v \in \left\{ \infty \right\}$ to mean that $v$ is an archimedean place of $k$; {\bf (ii)} if $\mathfrak{p}>0$, we take $\infty$ to be a fixed place at infinity for $k$, $v_0 \neq \infty$.

\begin{lemma}\label{GL1:glob}
Let $k$ be a global field, $v_0$ a finite place of $k$, $\lambda$ a multiplicative character of $k_{v_0}^\times$. Then there exists a character $\chi = \otimes \chi_v$ of $\mathbb{A}_k/k^\times$ such that:
\begin{enumerate}
   \item[(i)] $\chi_{v_0} \simeq \lambda$;
   \item[(ii)] $\chi_v$ is unramified for $v \notin \left\{ v_0, \infty \right\}$;
   \item[(iii)] if $\mathfrak{p}>0$, then $\chi_\infty$ is tame.
\end{enumerate}
\end{lemma}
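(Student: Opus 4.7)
The plan is to piece $\chi$ together from prescribed local components and then extend using finiteness of an $S$-class group. Set $S = \{v_0\} \cup \{v \mid \infty\}$ when $\mathfrak{p} = 0$ and $S = \{v_0, \infty\}$ when $\mathfrak{p} > 0$; let $\mathcal{O}_{k,S}^\times$ be the $S$-units of $k$. I take $\chi_{v_0} := \lambda$ and $\chi_v$ trivial on $\mathcal{O}_v^\times$ for every finite $v \notin \{v_0, \infty\}$; the remaining freedom is a single local component $\chi_\infty$ at the archimedean place(s), chosen to absorb the obstruction to triviality of $\chi$ on $k^\times$.

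The crucial step is to arrange $\chi_\infty(\beta) = \lambda(\beta)^{-1}$ for every $\beta \in \mathcal{O}_{k,S}^\times$, with $\chi_\infty$ tame when $\mathfrak{p} > 0$. When $\mathfrak{p} = 0$, Dirichlet's $S$-unit theorem identifies $\mathcal{O}_{k,S}^\times$ modulo torsion with a full-rank lattice in $\prod_{v \mid \infty} k_v^\times / (\text{maximal compact})$, so its diagonal image in $\prod_{v \mid \infty} k_v^\times$ is a closed subgroup, and Pontryagin duality extends the continuous character $\lambda^{-1}|_{\mathcal{O}_{k,S}^\times}$ to a continuous character $\chi_\infty$ of $\prod_{v \mid \infty} k_v^\times$. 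When $\mathfrak{p} > 0$, the composition $\mathcal{O}_{k,S}^\times \hookrightarrow k_\infty^\times \twoheadrightarrow k_\infty^\times / (1 + \mathfrak{m}_\infty) \cong \mathbb{Z} \times \kappa_\infty^\times$ is injective: a non-constant $S$-unit $u$ has $v_\infty(u) \neq 0$ because $\operatorname{div}(u)$ is supported in $S$ and has degree zero, and a constant $c \in \mathbb{F}_q^\times$ with $c \equiv 1 \pmod{\mathfrak{m}_\infty}$ must reduce to $1$ in $\kappa_\infty$, hence $c = 1$. Since every subgroup of the discrete LCA group $\mathbb{Z} \times \kappa_\infty^\times$ is closed, $\lambda^{-1}|_{\mathcal{O}_{k,S}^\times}$ extends through the injection and inflates to a tame character $\chi_\infty$ of $k_\infty^\times$.

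Let $H \subset \mathbb{A}_k^\times$ be the open subgroup of ideles that are units at every finite place outside $\{v_0, \infty\}$, and define $\chi_0 : H \to \mathbb{C}^\times$ by
\begin{equation*}
   \chi_0\bigl((x_v)_v\bigr) \;=\; \lambda(x_{v_0}) \cdot \chi_\infty(x_\infty),
\end{equation*}
where $\chi_\infty(x_\infty) = \prod_{v \mid \infty} \chi_v(x_v)$ in the number field case. Since $H \cap k^\times = \mathcal{O}_{k,S}^\times$, the choice of $\chi_\infty$ forces $\chi_0|_{H \cap k^\times} = 1$, so $\chi_0$ descends to a character of $H \cdot k^\times$ trivial on $k^\times$. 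The quotient $\mathbb{A}_k^\times / (H \cdot k^\times)$ identifies with the $S$-class group of $\mathcal{O}_{k,S}$, which is finite, so $H \cdot k^\times$ is open in $\mathbb{A}_k^\times$ and Pontryagin duality produces a continuous extension $\chi$ of $\chi_0$ to all of $\mathbb{A}_k^\times$; conditions (i)--(iii) then hold by construction. The one genuinely forced step is the existence of $\chi_\infty$: it is precisely the freedom to be unrestricted (respectively, tame) at the archimedean place(s) that lets one kill the $\mathcal{O}_{k,S}^\times$-obstruction, which explains why (ii) excludes $\infty$ and (iii) asks only for tameness.
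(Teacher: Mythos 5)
Your proof is correct, but it takes a genuinely different route from the paper's. The paper does not impose $\chi_{v_0}=\lambda$ from the start. Instead it forms the compact group $U=\prod_{v\text{ finite}}U_v$ (for $\mathfrak p>0$, with $U_\infty$ replaced by $U_\infty^1$), observes that $k^\times\cap U=\{1\}$ and that $k^\times U$ is closed (discrete times compact), places on $k^\times U$ the character which is trivial on $k^\times$ and on the $U_v$ for $v\neq v_0$ and equals $\lambda$ on $U_{v_0}$ (automatically unitary, since $U_{v_0}$ is compact), extends it by Pontryagin duality, and then twists by a power of the id\`ele norm to force exact equality with $\lambda$ at $v_0$. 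You do the opposite: you set $\chi_{v_0}=\lambda$ on the nose, which creates an obstruction supported on the $S$-units $\mathcal O_{k,S}^\times$, and you kill it by choosing $\chi_\infty$ appropriately, using Dirichlet's $S$-unit theorem over number fields and the degree-zero property of principal divisors plus injectivity of $\mathbb F_q^\times\hookrightarrow\kappa_\infty^\times$ over function fields; you then extend from the open subgroup $Hk^\times$ of $\mathbb A_k^\times$. Your approach avoids the final id\`ele-norm correction and makes the $S$-unit obstruction (and why $(\mathrm{ii})$, $(\mathrm{iii})$ leave $\infty$ free) transparent, at the price of invoking the $S$-unit theorem; the paper's is more elementary in that it needs only compactness and does all the work inside a closed (not open) subgroup, with a cosmetic twist at the end.

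One small imprecision worth noting: in the number-field case you appeal to ``Pontryagin duality'' to extend $\lambda^{-1}|_{\mathcal O_{k,S}^\times}$ from its (discrete, closed) image in $\prod_{v\mid\infty}k_v^\times$, but $\lambda$ need not be unitary, while Pontryagin duality in the strict sense concerns $\mathbb T$-valued characters. The extension still holds (split the character into its unitary and $\mathbb R_{>0}$-valued parts and use that both $\mathbb T$ and $\mathbb R$ are injective in LCA groups, or extend the positive part $\mathbb R$-linearly through the lattice $\mathcal O_{k,S}^\times/\text{torsion}\hookrightarrow\prod_{v\mid\infty}k_v^\times/(\text{max.\ compact})\cong\mathbb R^{r_1+r_2}$), but the justification should be spelled out. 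The paper avoids this issue entirely since its initial character lives on a compact group and is therefore unitary by default. Also note that finiteness of the $S$-class group, while true, is not really needed for your last extension step: openness of $Hk^\times$ together with injectivity of $\mathbb C^\times$ in abelian groups already suffices.
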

\begin{proof}
First let $\mathfrak{p}=0$. For each finite place $v$ of $k$, the group of units $U_v$ of $k_v^\times$ is compact and so is the product
\begin{equation*}
   U = \prod_{v\ {\rm finite}} U_v.
\end{equation*}
We have $k^\times \cap U = \left\{ 1 \right\}$ in $\mathbb{A}_k^\times$, and $k^\times U$ is closed in $\mathbb{A}_k^\times$. The character of $k^\times U$ which is trivial on $k^\times$ and $U_v$ for $v \neq v_0$, and given by $\lambda$ on $U_{v_0}$, extends to a (unitary) character $\chi'$ of $\mathbb{A}_k^\times$, trivial on $k$ and $U_v$ for $v \neq v_0$, and $\chi_{v_0}'$ differs from $\lambda$ by a power of the absolute value character of $k_{v_0}^\times$. Modifying $\chi'$ accordingly by a power of the absolute value id\`ele class character of $\mathbb{A}_k^\times$, we find the desired $\chi$.

\medskip

Now assume $\mathfrak{p}>0$. For a place $v$ of $k$, let $U_v^1$ be the pro-$p$ radical of $U_v \subset k_v^\times$. The groups
\begin{equation*}
   U' = U_{\infty}^1 \times \prod_{v \notin \{ v_0, \infty\}} U_v
   \text{ and } U = U_{v_0} \times U'
\end{equation*}
are compact and we have a map $\Phi : U \rightarrow k^\times \backslash \mathbb{A}_k^\times$, with ${\rm Im}\,\Phi$ compact. Also, ${\rm Ker}\,\Phi = \{ 1 \}$. Indeed, let $x \in {\rm Ker}\,\Phi$, so that $x_v \in U_v$ for all places $v$ of $k$. Then $x$ must be a constant function on the smooth projective curve $X$ with function field $k$. But, $x_\infty \in 1+\mathfrak{p}_\infty$, gives $x_\infty=1$. Hence, $x=1$.

The character
\begin{equation*}
   \lambda\vert_{U_{v_0}} \otimes \mathds{1}_{U'} \text{ of } 
   U,
\end{equation*}
gives a character of ${\rm Im}\,\Phi$, which is a closed subgroup of $k^\times \backslash\mathbb{A}_k^\times$. Then, we can extend this character to one of $k^\times \backslash\mathbb{A}_k^\times$, which we denote by $\chi'$. At $k_{v_0}$, the character $\chi_{v_0}' \cdot \lambda^{-1}$ is unramified, hence a power of the absolute value character of $k_{v_0}^\times$. Twisting back $\chi'$ by a power of the absolute value id\`ele class character of $\mathbb{A}_k^\times$, we find the desired $\chi$.
\end{proof}

\subsection{}\label{begin:p0}
We now begin the proof when $\mathfrak{p}=0$. So we let $\pi$ be a supercuspidal representation of $G$. As $G$ is a product of ${\rm GL}_{n_i}(E_i)$'s, $\pi$ is a tensor product of cuspidal representations $\pi_i$ of ${\rm GL}_{n_i}(E_i)$; if $\sigma_i$ is the representation of $\mathcal{W}_{E_i}$ corresponding to $\pi_i$, there is an unramified character $\eta_i$ such that $\eta_i \sigma_i$ has finite image. It will be convenient to choose the extension $\widetilde{E}$ of $F$ in Lemma~\ref{GF:lemma} to contain, not only the $E_i$'s, but also the fixed field of ${\rm Ker}\,(\eta_i \sigma_i)$. As explained above, Lemma~\ref{GF:lemma} allows us to choose a number field $k$ and an extension $l$ so that ${\bf G}/F$ and $r$ lift to $k$. For each $i$, we let $l_i$ be the subfield of $\widetilde{E}$ corresponding to $E_i$ via the lemma. We shall assume that $\psi$ is the component at $v_0$ of a character $\Psi$ of $k \backslash \mathbb{A}_k$, which is enough by \S~\ref{123} 2).

It remains to lift $\pi$ appropriately to apply the reasoning sketched at the end of the introduction for the Asai cube case. The point is to globalize each $\sigma_i$ to a representation $\Sigma_i$ of $\mathcal{W}_{l_i}$, which corresponds via the global Langlands correspondence to a cuspidal automorphic representation $\Pi_i$ of ${\rm GL}_{n_i}(\mathbb{A}_{l_i})$. We then get from the $\Pi_i$'s a cuspidal automorphic representation $\Pi$ of ${\bf G}_k(\mathbb{A}_k)$, and from the $\Sigma_i$'s a corresponding global parameter $\Phi_\Sigma : \mathcal{W}_k \rightarrow {}^{\rm L}G_k$. As indicated in \S~\ref{sketch:proof}, we can then write the two global functional equations (vii) and ${\rm (vii)}_\mathscr{G}$ for a large enough finite set $S$ of places of $k$. Provided we have equality of the factors on both sides at places $v \in S \setminus \{ v_0 \}$, we get equality at $v_0$. Hence our desired equality \eqref{maineq:Asai}.

\subsection{}\label{dihedral:0}
Let us first treat the case where each $\sigma_i$ of dimension $2$ has dihedral image in the projective linear group --note that if the residual characteristic $p$ of $F$ is odd, we are necessarily in this dihedral situation \cite{We1974}. There is then for each $i$ such that $n_i=\dim \sigma_i = 2$ a quadratic separable extension $D_i$ of $E_i$ in $\overline{F}$ and a character $\chi_i$ of $D_i^\times$ such that $\sigma_i$ is induced from the character of $\mathcal{W}_{D_i}$ corresponding to $\chi_i$ via class field theory. By our choice of $\widetilde{E}$, $k$ and $l$, there is a quadratic extension $m_i$ of $l_i$ in $l$ giving $D_i/E_i$ at the place $v_0$.

By Lemma~\ref{GL1:glob} in ${\rm char}\,\mathfrak{p}=0$, we choose a character $\mathcal{X}_i$ of $\mathbb{A}_{m_i}^\times/m_i^\times$ with component $\chi_i$ at $v_0$, and unramified at other finite places. We take for $\Sigma_i$ the dimension $2$ representation of $\mathcal{W}_{l_i}$ induced from $\mathcal{X}_i$ (or rather the character of $\mathcal{W}_{m_i}$ corresponding to it). For each $i$ such that $n_i=\dim\sigma_i=1$, we use Lemma~\ref{GL1:glob} to globalize $\sigma_i$ to a character $\Sigma_i$ of $\mathcal{W}_{l_i}$, unramified at finite places different from $v_0$. By construction if $n_i=1$, and Jacquet-Langlands \cite{JaLa} if $n_i=2$, there is a cuspidal automorphic representation $\Pi_i$ of ${\rm GL}_{n_i}(\mathbb{A}_{l_i})$ corresponding to $\Sigma_i$ via the global Langlands correspondence. In this way, we get a cuspidal automorphic representation $\Pi$ of ${\bf G}_k(\mathbb{A}_k)$ and a global parameter $\Phi_\Pi : \mathcal{W}_k \rightarrow {}^{\rm L}G_k$ corresponding to it via the global Langlands correspondence, as globally assumed in \S~\ref{Galois:axioms}.

At a finite place $v \neq v_0$, $\Sigma_i$ is either unramified (if $n_i=1$ for example) or reducible, so that $\Pi_{i.v}$ is a principal series. In either case, the desired equality holds at $v$ by property (v) of \S~\ref{LS:axioms} or by the induction assumption of \S~\ref{123} 1). We conclude that equality holds at $v_0$ as well, which is what we wanted.

\subsection{}\label{nonD:0}
Let us now treat the case where $F$ has residue characteristic $2$ and some of the $\sigma_i$'s of dimension $2$ may have image in $A_4$ or $S_4$ in the projective linear group. We use Remark~\ref{rmk:0p}~(i) and choose $k$ (and $l$) to have only one place above $2$.

For the indices $i$ such that $\sigma_i$ has dimension $1$, we globalize as before, using Lemma~\ref{GL1:glob}. For the indices $i$ such that $\sigma_i$ has dimension $2$, as we have ensured, the representation $\eta_i \sigma_i$ of \S~\ref{begin:p0} has finite image and factorizes through ${\rm Gal}(l/k)$, we get a representation of $\mathcal{W}_{l_i}$ yielding $\eta_i\sigma_i$ at the place $v_0$, which we can twist by a power of the absolute value character to obtain a representation $\Sigma_i$ of $\mathcal{W}_{l_i}$ yielding $\sigma_i$ at the place $v_0$. The image of $\Sigma_i$ in the projective linear group is dihedral, $A_4$ or $S_4$ so there is by \cite{JaLa,La,Tu1978} a corresponding cuspidal automorphic representation $\Pi_i$ of ${\rm GL}_2(\mathbb{A}_{l_i})$. Since finite places of $l_i$ different from $v_0$ have odd residue characteristic, at such places $\Sigma_i$ is either reducible or dihedral. We then proceed as before in comparing two global functional equations for $\Sigma$ and $\Pi$: at finite places of $k$ in $S$ distinct from $v_0$, equality holds because we already treated the dihedral case, or by induction, and consequently it holds at $v_0$ as well.

\subsection{Case of $\mathfrak{p}>0$}\label{p>0}
We use the same reasoning by induction as in the characteristic zero case, so we can assume that $\pi$ is cuspidal. If all of the $n_i$'s are $1$, then $\bf G$ is a product of induced tori, and the result is known, more generally, for any torus \cite{HeLoFuture}. So we can assume $n_i=2$ for at least one $i$.

As in the characteristic zero case of \S~\ref{begin:p0}, we choose a Galois extension $\widetilde{E}$ of $F$ in $\overline{F}$, using Lemma~\ref{GF:lemma} to lift $\widetilde{E}/F$ to a global situation $l/k$. Following \S~\ref{begin:p0}, and using the same notation, we obtain representations $\sigma_i$, $\pi_i$. If for each $i$ such that $n_i=2$, $\sigma_i$ has dihedral image in the projective linear group, we can use Lemma~\ref{GL1:glob}, now in the characteristic $\mathfrak{p}>0$ case.

Following \S~\ref{dihedral:0} in the case of dihedral image for each $i$, such that $n_i=2$, we choose $D_i$ and $\chi_i$. To $D_i$ corresponds in $l$ a quadratic extension $m_i$ of $l_i$ and we extend $\chi_i$ to a character $\mathcal{X}_i$ of $\mathbb{A}_{m_i}^\times/m_i^\times$ unramified outside $v_0$ and some other place $v_1$, where it is tame. We choose $v_1$ to be above a place of $l_i$ split in $m_i$. So at that place of $l_i$, $\Sigma_i$ (obtained by inducing from $\mathcal{X}_i$) is reducible. The same reasoning as in \S~\ref{dihedral:0} then yields the result. We have now proved our result for $\mathfrak{p}>2$, since all the $\sigma_i$'s of dimension $2$ are dihedral.

If $\mathfrak{p}=2$ and one $\sigma_i$ of dimension $2$ has image $A_4$ or $S_4$ in the projective linear group, we use the result of Gabber and Katz \cite{Ka1986}. Namely, let $F$ be a non-archimedean local field of characteristic $p$, with residue field $\mathbb{F}_q$. We let $k = \mathbb{F}_q(t)$, and we choose an isomorphism $F \simeq \mathbb{F}_q (\!( t )\!)$, so that $F$ is the completion of $k$ at $0$. Then our Galois extension $\widetilde{E}/F$ can be globalized (uniquely, as it turns out) to a Galois extension $l$ of $k$, unramified outside $0$ and $\infty$, and tame at infinity. Now, following \S~\ref{nonD:0} we globalize $\sigma_i$ to $\Sigma_i$, which is unramified outside of $0$ and $\infty$ and tame at infinity; in particular, if $n_i=2$, $\Sigma_i$ is dihedral at $\infty$. Moreover, there is a cuspidal automorphic representation $\Pi_i$ of ${\rm GL}_{n_i}(l_i)$ corresponding to $\Sigma_i$, see Tunnell \cite{Tu1978}. We can then proceed as before, because at places other than $0$ and $\infty$ we are in an unramified situation, whereas at $\infty$ we have already treated the dihedral representations $\Sigma_{i,\infty}$ of dimension $2$. We can thus conclude equality at $0$.

\begin{remark}
The method of \cite{GaLo} would proceed a bit differently, rather producing first a globalization $\Pi_i$ of $\pi_i$ which is (at most) tamely ramified at $\infty$ and unramified outside $\left\{ 0, \infty \right\}$. Then invoking the proof by Drinfeld \cite{Dr1978} of the global Langlands conjecture for ${\rm GL}_2$ over $l_i$ to get an ($\ell$-adic) Galois representation $\Sigma_i$ associated to $\Pi_i$.
\end{remark}

\section{Consequences}\label{consequences}

In this section we first draw consequences of the main equality, on twisting with unramified characters first, and with highly ramified characters secondly, thus proving stability. We then recall the definition of $L$-functions and $\varepsilon$-factors, and we extend Theorem~\ref{GL2class:thm} from generic to general smooth irreducible representations $\pi$. Compatibility with the local Langlands correspondence for $L$-functions and $\varepsilon$-factors is an immediate corollary.

\subsection{Twisting with characters}\label{character}

Let $\bf G$ be our (pinned) quasi-split reductive group of ${\rm GL}_2$-kind defined over $F$. If $\bf G$ is the product of ${\rm Res}_{E_i/F}{\rm GL}_{n_i}$, $n_i \leq 2$, a character $\chi$ of $G = {\bf G}(F) = \prod_{i}{\rm GL}_{n_i}(E_i)$ is a product $\chi = \prod_i \chi_i \circ \det$ where each $\chi_i$ is a character of $E_i^\times$. Thus $\chi$ factorizes through the maximal quotient torus $Y$ of $G$, $Y = \prod_i E_i^\times = \prod_i {\bf Y}_i(F)$, where ${\bf Y}_i = {\rm Res}_{E_i/F}{\bf G}_m$. Dually, there is an embedding ${}^{\rm L}Y \rightarrow {}^{\rm L}G$ with $\widehat{\bf Y}$ going (isomorphically, in fact) to the center $Z_{\widehat{\bf G}}$ of $\widehat{\bf G}$.

If $\pi$ is a smooth irreducible representation of $G$, twisting $\pi$ by $\chi$ should correspond, via the local Langlands correspondence, to multiplying $\phi_\pi : \mathcal{W}_F \rightarrow {}^{\rm L}G$ by a map $\phi_\chi : \mathcal{W}_F \rightarrow Z_{\widehat{G}}$ determined by $\chi$ via the recipe of \cite{Bo1979}. And in fact it does: if $\tau_i$ is a smooth irreducible representation of ${\rm GL}_{n_i}(E_i)$ corresponding to a parameter $\phi_{\tau_i} : \mathcal{W}_{E_i} \rightarrow {\rm GL}_{n_i}(\mathbb{C})$, then twisting by $\tau_i$ by $\chi_i \circ \det$ corresponds to multiplying $\phi_{\tau_i}$ by the character $\eta_i$ of $\mathcal{W}_{E_i}$ corresponding to $\chi_i$ via class field theory. The $\eta_i$'s yield a parameter $\eta : \mathcal{W}_F \rightarrow {}^{\rm L}Y = \widehat{Y} \rtimes \mathcal{W}_F$ and twisting $\pi$ by $\chi$ indeed corresponds to twisting $\phi_\pi$ by the map given by the first component of $\eta$.

\subsection{Unramified characters} The group ${\rm Hom}_F({\bf G},{\bf G}_m)$ is a finitely generated free $\mathbb{Z}$-module, and the group of unramified characters $X_{\rm nr}(G)$ is isomorphic to a quotient of ${\rm Hom}_F({\bf G},{\bf G}_m) \otimes \mathbb{C}$: if $(\chi_1, \ldots, \chi_e)$ is a basis of ${\rm Hom}_F({\bf G},{\bf G}_m)$, then $\chi_1 \otimes s_1 + \cdots + \chi_e \otimes s_e$ corresponds to the unramified character $g \mapsto \prod_{i=1}^e \left| \chi_i(g) \right|_F^{s_i}$.

Let $r$ be an LS-representation of ${}^LG$, obtained via a pair $({\bf H},{\bf L})$ with $\bf H$ semisimple simply connected, and $\iota : {\bf L} \rightarrow {\bf G}$. Then ${\rm Hom}_F({\bf L},{\bf G}_m)$ has rank $1$, and a non-zero element is $\delta = \det({\rm Ad}_{\bf L}\vert_\mathfrak{n})$, where $\mathfrak{n}$ is the Lie algebra of the unipotent radical $\bf N$ of the standard parabolic subgroup $\bf P$ of $\bf G$ with Levi subgroup $\bf L$. We let $\tilde{\delta}$ be the unramified character corresponding to $\delta \otimes (1/\left\langle \delta,\alpha^\vee \right\rangle)$ if $\bf P$ is obtained by omitting the simple root $\alpha$.

Now $\iota$ induces a morphism ${\rm Hom}_F({\bf G},{\bf G}_m) \rightarrow {\rm Hom}_F({\bf L},{\bf G}_m)$ and an analogous morphism when tensoring with $\mathbb{C}$, which we still write as composition with $\iota$. If $\chi$ is an unramified character of $G$, the corresponding unramified character $\chi \circ \iota$ of $L$ has the form $\tilde{\delta}^{s_0}$ for $s_0 \in \mathbb{C}$ well defined up to $(2\pi i/\log q_F) \mathbb{Z}$. From the known property of unramified character twists for the LS-representation of ${}^{\rm L}L$ yielding $r$, Theorem~5.1 of \cite{LS}, we get:

\begin{enumerate}
\item[${\rm (viii)}$] \emph{(Twists by unramified characters).} Let $(F,\psi,{\bf G},\pi_i,r) \in \mathscr{A}_2(\mathfrak{p})$ be generic. Let $\chi$ be an unramified character of $G$, with $\chi \circ \iota = \tilde{\delta}^{s_0}$, then
   \begin{equation*}
      \gamma(s,\pi \otimes \chi,r,\psi) = \gamma(s+s_0,\pi,r,\psi).
   \end{equation*}
\end{enumerate}

\subsection{Highly ramified characters}\label{stability:prop} We shall take highly ramified characters in the following manner: we choose $\chi \in {\rm Hom}_F({\bf G},{\bf G}_m)$ yielding a morphism $G \rightarrow F^\times$, and compose it with a character $\lambda : F^\times \rightarrow \mathbb{C}^\times$. The following property is a corollary to Theorem~\ref{GL2class:thm}.

\begin{enumerate}
\item[${\rm (ix)}$] \emph{(Stability).} For $i = 1$ and $2$, let  $(F,\psi,{\bf G},\pi_i,r) \in \mathscr{A}_2(\mathfrak{p})$ be generic and such that their central characters are equal. Let $\chi \in {\rm Hom}_F({\bf G},{\bf G}_m)$ be such that $\chi \circ \iota \neq 0$. Then for all sufficiently ramified characters $\lambda : F^\times \rightarrow \mathbb{C}^\times$:
\begin{equation*}
   \gamma(s,\pi_1 \otimes (\lambda \circ \chi),r,\psi) = \gamma(s,\pi_2 \otimes (\lambda \circ \chi),r,\psi).
\end{equation*}
\end{enumerate}

\begin{proof}
We translate to the Weil group side, where a corresponding property has been proved by Deligne-Henniart. More precisely, it follows from \cite{DeHe} that if $\sigma_1$ and $\sigma_2$ are two representations of $\mathcal{W}_F$ with the same dimension and determinant, then for all sufficiently ramified characters $\eta$ of $\mathcal{W}_F$, we have
\begin{equation*}
   \gamma(s,\sigma_1 \otimes \eta,\psi) = 
   \gamma(s,\sigma_2 \otimes \eta,\psi).
\end{equation*}

If $\phi_{\pi_i} : \mathcal{W}_F \rightarrow {}^{\rm L}G$ is the parameter of $\pi_i$, the parameter for $\pi_i \otimes(\lambda \circ \chi)$ is $\phi_{\pi_i} \cdot \phi_{\lambda \circ \chi}$ where $\phi_{\lambda \circ \chi}$ is described in \S~\ref{character}: explicitly $\chi : {\bf G} \rightarrow {\bf G}_m$ yields an L-map ${}^{\rm L}\chi : {}^{\rm L}{G}_m \rightarrow {}^{\rm L}G$ and ${}^{\rm L}\chi(\mathbb{C}^\times)$ is contained in the $\mathcal{W}_F$-fixed part of the center $Z_{\widehat{G}}$ of $\widehat{G}$; $\phi_{\lambda \circ \chi}$ is the map $\mathcal{W}_F \rightarrow {}^{\rm L}G$ obtained by composing ${}^{\rm L}\chi$ with the character $\mathcal{W}_F \rightarrow \mathbb{C}^\times$ corresponding to $\lambda$ via class field theory. Now $r = \rho \circ {}^{\rm L}\iota$ where $\rho$ is an irreducible representation of ${}^{\rm L}L$ and ${}^{\rm L}\iota : {}^{\rm L}G \rightarrow {}^{\rm L}L$ is dual to $\iota : {\bf L} \rightarrow {\bf G}$.

Because $\iota$ induces an isogeny on derived subgroups ${}^{\rm L}\iota(Z_{\widehat{G}})$ is contained in the center $Z_{\widehat{L}}$ of $\widehat{L}$, and ${}^{\rm L}\iota \circ \phi_{\lambda \circ \chi}$ is a character with values in $Z_{\widehat{L}}^{\mathcal{W}_F}$. So that
\begin{equation}
   r(\phi_{\pi_i} \cdot \phi_{\lambda \circ \chi}) = 
   [ \rho \circ {}^{\rm L}\iota(\phi_{\pi_i}) ] \cdot 
   \omega_\rho({}^{\rm L}\iota \circ \phi_{\lambda \circ \chi}),
\end{equation}
where $\omega_\rho$ is the central character of $\rho$. The assumption on $\chi$ implies that $\omega_\rho({}^{\rm L}\iota\circ{}^{\rm L}\chi)$ is a non-trivial (algebraic) character of $\mathbb{C}^\times$, and composing with sufficiently ramified characters of $\mathcal{W}_F$ still gives sufficiently ramified characters. It remains to verify that $r(\phi_{\pi_1})$ and $r(\phi_{\pi_2})$ have the same determinant: the above result of \cite{DeHe}, then gives the result.

To prove $\det r(\phi_{\pi_1}) = \det r(\phi_{\pi_2})$, we use a property of the local Langlands correspondence: if $\pi$ is a smooth irreducible representation of ${\rm GL}_n(E)$ (where $E$ is a finite extension of $F$ in $\overline{F}$) and $\phi_\pi : \mathcal{W}_E \rightarrow {\rm GL}_m(\mathbb{C})$ is its L-parameter, then $\det \circ\,\phi_\pi : \mathcal{W}_E \rightarrow \mathbb{C}^\times$ corresponds to the central character $\omega_\pi$ of $\pi$ via class field theory.

Write ${\bf G} = \prod_j {\bf G}_j$, where ${\bf G}_j = {\rm Res}_{E_j/F}{\rm GL}_{n_j}$, $n_j \leq 2$, and accordingly $\pi_i = \otimes_j \pi_{i,j}$, where $\pi_{i,j}$ is a smooth irreducible representation of $G_j = {\rm GL}_{n_j}(E_j)$. If $\phi_{i,j} : \mathcal{W}_{E_j} \rightarrow {\rm GL}_{n_j}(\mathbb{C})$ is the parameter of $\pi_{i,j}$, then $\phi_{\pi_i} : \mathcal{W}_F \rightarrow {}^{\rm L}G$ is obtained as follows: we see $\phi_{i,j}$ as a morphism $\mathcal{W}_{E_i} \rightarrow {}^{\rm L}{\rm GL}_{n_j}/E_j = {\rm GL}_{n_j}(\mathbb{C}) \ltimes \mathcal{W}_{E_j}$ (given by the identity on the second component), giving an induced morphism $\phi_{i,j}' : \mathcal{W}_F \rightarrow {}^{\rm L}{G}_{j} = \widehat{G}_j \ltimes \mathcal{W}_F$ and $\phi_\pi$ is the ``product'' of these morphisms in the sense that on the second component $\widehat{G} = \prod_j \widehat{G_j}$ it is the product of the $\phi_{i,j}'$. Now $\det \circ\,r$ is a $\mathcal{W}_F$-invariant algebraic character of $\widehat{G}$, which on each component comes from some power of the determinant character ${\rm GL}_{n_j}(\mathbb{C})$. Thus $\det r(\phi_{\pi_i})$ is a character of $\mathcal{W}_F$, where we can expand the determinant and find that it is given in an explicit way by the characters $\det \circ\,\phi_{i,j}$ of $\mathcal{W}_{E_j}$, i.e., by the central characters $\omega_{\pi_{i,j}}$. If $\omega_{\pi_1} = \omega_{\pi_2}$, then certainly we have $\omega_{\pi_{1,j}} = \omega_{\pi_{2,j}}$ for each $j$, and consequently $\det r (\phi_{\pi_1}) = \det r (\phi_{\pi_2})$.
\end{proof}


\subsection{$L$-functions and $\varepsilon$-factors}\label{L3:axioms} We first recall how to obtain $L$-funtions and $\varepsilon$-factors from $\gamma$-factors. And, with the same construction, we extend the definition of $\gamma$-factors from generic to general smooth irreducible representations. We begin by first recording the local functional equation for generic $\gamma$-factors, which follows either from uniqueness or from Theorem~\ref{GL2class:thm} and the corresponding property for Galois $\gamma$-factors.

\begin{enumerate}
\item[${\rm (x)}$] \emph{(Local functional equation).} Let $(F,\psi,{\bf G},\pi_i,r) \in \mathscr{A}_2(\mathfrak{p})$ be generic, then
   \begin{equation*}
      \gamma(s,\pi,r,\psi) \gamma(1-s,\tilde{\pi},r,\overline{\psi}) = 1.
   \end{equation*}
\end{enumerate}

We say that $(F,\psi,{\bf G},\pi,r) \in \mathscr{A}_2(\mathfrak{p})$ is tempered if $\pi$ is a tempered representation of $G$. Observe that for $\bf G$ of ${\rm GL}_2$-kind, a tempered representation is also generic.

The following property is the tempered $L$-function conjecture, originally stated in \cite{Sh1990} and proved for Langlands-Shahidi $L$-functions in \cite{HeOp2013}. Furthermore, we observe that for $\bf G$ of ${\rm GL}_2$-kind (even ${\rm GL}_n$-kind), given a tempered representation $\pi$ its L-parameter $\phi_\pi$ is bounded. Then if $\tau$ is the generic component of the smooth representation $\pi \circ \iota$ of $L$, then $\phi_{\tau}$ is also bounded. We thus obtain an alternative proof of the tempered $L$-function conjecture for $\bf G$ of ${\rm GL}_2$-kind, which now follows from the Galois side, in view of Theorem~\ref{GL2class:thm}.

\begin{enumerate}
\item[${\rm (xi)}$] \emph{(Tempered $L$-functions).} For $(F,\psi,{\bf G},\pi,r) \in \mathscr{A}_2(\mathfrak{p})$ tempered, let $P_{\pi}(t)$ be the unique polynomial with $P_{\pi}(0) = 1$ and such that $P_{\pi}(q_F^{-s})$ is the numerator of $\gamma(s,\pi,r,\psi)$. Then
   \begin{equation*}
      L(s,\pi,r) = \dfrac{1}{P_{\pi}(q_F^{-s})}.
   \end{equation*}
is holomorphic and non-zero for $\Re(s) > 0$.
\end{enumerate}

\noindent The above two properties lead us to the following well defined notion of $\varepsilon$-factors.

\begin{enumerate}
\item[${\rm (xii)}$] \emph{(Tempered $\varepsilon$-factors).} Let $(F,\psi,{\bf G},\pi_i,r) \in \mathscr{A}_2(\mathfrak{p})$ be tempered, then
   \begin{equation*}
      \varepsilon(s,\pi,r,\psi) = 
      \gamma(s,\pi,r,\psi) 
      \dfrac{L(s,\pi,r)}{L(1-s,\tilde{\pi},r)}
   \end{equation*}
is a monomial in $q_F^{-s}$.
\end{enumerate}

If we start with a tempered (generic) representation, the dependence of the $L$-factor is holomorphic when we twist by unramified characters of $G$. This allows us to use the Langlands classification, which we now state, in order to address the general case.

\begin{enumerate}
\item[${\rm (xiv)}$] \emph{(Langlands classification).} Let $(F,\psi,{\bf G},\pi,r) \in \mathscr{A}_2(\mathfrak{p})$. Let $\bf P$ be a parabolic subgroup of $\bf G$, with Levi subgroup $\bf M$, and such that $\pi$ is the unique quotient of
    \begin{equation*}
      {\rm ind}_P^G \rho \otimes \chi,
   \end{equation*}
where $\rho$ is a tempered representation of $M$ and $\chi$ is an unramified character in the Langlands situation. Let $\phi_\pi$ be the $L$-parameter attached to $\pi$ via the local Langlands correspondence; and, let $\phi_\tau$ be the L-parameter of $\tau = \tau_0 \otimes \chi$. Then
\begin{equation*}
   \phi_\pi: \mathcal{W}_F' \xrightarrow{\phi_\tau} {}^{\rm L}M 
   \longrightarrow {}^{\rm L}G.
\end{equation*}
\end{enumerate}

It is via Langlands classification that we extend the definition of $\gamma$-factors from generic to general smooth irreducible representations; in addition to passing from tempered $L$-functions and $\varepsilon$-factors to the general setting. We observe that, while we do obtain multiplicativity for $\gamma$-factors, we do not have multiplicativity of local $L$-functions and root numbers in general. The construction of $L$-functions and $\varepsilon$-factors from $\gamma$-factors follows the idea of Shahidi \cite{Sh1990}.

Now, let $(F,\psi,{\bf G},\pi,r) \in \mathscr{A}_2(\mathfrak{p})$ be tempered (hence generic). A tempered representation $\pi$ corresponds to a bounded $L$-parameter $\phi_\pi$, for which its $L$-function is indeed given by the numerator of the corresponding Galois $\gamma$-factor. For tempered representations, Galois $\gamma$-factors, $L$-functions and $\varepsilon$-factors behave well under all unramified twists.

Given a general $(F,\psi,{\bf G},\pi,r) \in \mathscr{A}_2(\mathfrak{p})$, local Galois factors are compatible with Langlands classification. We thus conclude the proof of Theorem~\ref{GL2class:thm} and obtain the following corollary.

\begin{corollary}\label{L3:thm}
Let $(F,\psi,{\bf G},\pi,r) \in \mathscr{A}_2(\mathfrak{p})$ and let $\phi_\pi$ be the $L$-parameter such that $\pi \leftrightarrow \phi_\pi$ are related via the local Langlands correspondence. Then
\begin{align*}
   \gamma(s,\pi,r,\psi) &= \gamma(s,r \circ \phi_\pi,\psi), \\
   L(s,\pi,r) &= L(s,r \circ \phi_\pi), \\ 
   \varepsilon(s,\pi,r,\psi) &= \varepsilon(s,r \circ \phi_\pi,\psi).
\end{align*}
\end{corollary}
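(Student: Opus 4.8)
The plan is to deduce Corollary~\ref{L3:thm} from Theorem~\ref{GL2class:thm} together with Properties (x)--(xiv) of \S~\ref{L3:axioms}, treating first the tempered case and then extending to arbitrary smooth irreducible $\pi$ via the Langlands classification. The first equality, $\gamma(s,\pi,r,\psi) = \gamma(s,r\circ\phi_\pi,\psi)$, is already Theorem~\ref{GL2class:thm} when $\pi$ is generic; since every tempered representation of a group of ${\rm GL}_2$-kind is generic, it holds in the tempered case. For the $L$-function and $\varepsilon$-factor equalities in the tempered case, I would argue as follows. By Property (xi), $L(s,\pi,r) = 1/P_\pi(q_F^{-s})$ where $P_\pi(q_F^{-s})$ is the numerator of $\gamma(s,\pi,r,\psi)$. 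On the Galois side, since $\phi_\pi$ is bounded (as noted in the excerpt preceding (xi)), the Deligne--Langlands $L$-function $L(s,r\circ\phi_\pi)$ is likewise the reciprocal of the numerator of $\gamma(s,r\circ\phi_\pi,\psi)$ in $q_F^{-s}$ --- this is the defining property of Galois $L$-factors for bounded parameters, and the point that tempered $\Leftrightarrow$ bounded makes the two ``numerator'' recipes match. By Theorem~\ref{GL2class:thm} the two $\gamma$-factors coincide, hence so do their numerators, hence $L(s,\pi,r) = L(s,r\circ\phi_\pi)$. The $\varepsilon$-factor equality then follows immediately from the defining relation $\varepsilon(s,\pi,r,\psi) = \gamma(s,\pi,r,\psi)\,L(s,\pi,r)/L(1-s,\tilde\pi,r)$ of Property (xii) and its exact Galois analogue, using that $r\circ\phi_{\tilde\pi}$ is the contragredient of $r\circ\phi_\pi$ (compatibility of the local Langlands correspondence with duals, a standard property of ${\rm GL}_n$).

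Next I would pass to the general case. Let $(F,\psi,{\bf G},\pi,r)\in\mathscr{A}_2(\mathfrak{p})$ be arbitrary smooth irreducible. By the Langlands classification (xiv), $\pi$ is the unique Langlands quotient of ${\rm Ind}_P^G(\rho\otimes\chi)$ with $\rho$ tempered on a Levi ${\bf M}$ (necessarily of ${\rm GL}_2$-kind) and $\chi$ an unramified character in the Langlands situation; set $\tau = \tau_0\otimes\chi$ where $\tau_0$ is the generic component of $\rho$ restricted appropriately. The definitions in \S~\ref{L3:axioms} declare $\gamma(s,\pi,r,\psi)$, $L(s,\pi,r)$ and $\varepsilon(s,\pi,r,\psi)$ to be the corresponding factors for the inducing data on ${\bf M}$ --- i.e. built from $\gamma(s,\tau,r|_{{}^{\rm L}M},\psi)$ and the tempered $L$- and $\varepsilon$-factors of $\rho$ twisted by $\chi$ --- using multiplicativity of $\gamma$-factors (iv). On the Galois side, (xiv) gives $\phi_\pi = i\circ\phi_\tau$ with $i:{}^{\rm L}M\hookrightarrow{}^{\rm L}G$, so $r\circ\phi_\pi = (r|_{{}^{\rm L}M})\circ\phi_\tau$, and the Galois $\gamma$-, $L$- and $\varepsilon$-factors of $r\circ\phi_\pi$ are built from the same inducing data on the Galois side, via ${\rm (iv)}_\mathscr{G}$ and the analytic continuation in the Langlands parameter. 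Since $\rho$ (equivalently $\tau_0$) is tempered, the tempered case already established gives equality of all three factors for the data on ${\bf M}$; because both the automorphic and the Galois constructions of $L$ and $\varepsilon$ from $\gamma$ are carried out by the \emph{same} recipe of Shahidi (numerator of $\gamma$, then the $\varepsilon$-formula), the equalities propagate to $\pi$. This yields all three identities of Corollary~\ref{L3:thm} in general, completing also the proof of Theorem~\ref{GL2class:thm}.

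I expect the main obstacle to be the bookkeeping in the general (non-tempered) case: one must check that the \emph{same} combinatorial recipe --- isolate the polynomial numerator of the $\gamma$-factor in $q_F^{-s}$, invert it for $L$, then form $\varepsilon = \gamma\cdot L(s)/L(1-s)$ --- is genuinely used on both the Langlands--Shahidi side and the Deligne--Langlands side, and that the reductions along the Langlands classification are compatible on the two sides (i.e. that $\phi_\pi$ factoring through ${}^{\rm L}M$ matches $\gamma(s,\pi,r,\psi)$ being defined via $\gamma(s,\tau,r|_{{}^{\rm L}M},\psi)$, with the \emph{same} decomposition $r|_{{}^{\rm L}M} = \bigoplus_j r_j$ into LS-representations on both sides). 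The genuine mathematical input is confined to the tempered case, where it rests on the equivalence ``tempered $\Leftrightarrow$ bounded parameter'' (so that the two ``$L$ = reciprocal numerator of $\gamma$'' normalizations agree) together with Theorem~\ref{GL2class:thm} and the known compatibility of the ${\rm GL}_n$ local Langlands correspondence with contragredients; everything else is a matter of matching conventions. A minor subtlety worth flagging is that local $L$-factors and $\varepsilon$-factors are \emph{not} multiplicative in the inducing data in general (as the excerpt explicitly warns), so the argument must go through the $\gamma$-factor --- which \emph{is} multiplicative --- and only then re-derive $L$ and $\varepsilon$, rather than attempting a direct inductive comparison of $L$- or $\varepsilon$-factors.
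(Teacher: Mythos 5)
Your proposal matches the paper's own argument in structure and in substance: first handle the tempered case by combining Theorem~\ref{GL2class:thm} with the observation that ``tempered $\Leftrightarrow$ bounded parameter'' forces the numerator-of-$\gamma$ recipe for $L$ to agree on both sides (hence $L$ and then $\varepsilon$ agree), and then extend to general $\pi$ via the Langlands classification, routing everything through the multiplicative $\gamma$-factor rather than attempting a direct inductive comparison of $L$- or $\varepsilon$-factors. This is precisely how the paper concludes \S~\ref{L3:axioms}, so there is nothing further to add.
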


\section{Asai cube representation}\label{AsaiL}

In this section we show in detail that the Asai cube factors can indeed be obtained as LS-factors for a group of ${\rm GL}_2$-kind; applying Theorem~\ref{GL2class:thm} then gives \eqref{maineq:Asai}. Let $F$ be a local field or a global one; we fix a separable algebraic closure $\overline{F}$ of $F$ and let $\mathcal{W}_F$ be the Weil group of $\overline{F}/F$. We are given a cubic separable extension $E$ of $F$ in $\overline{F}$, with Galois closure $\widetilde{E}$, and we consider the group ${\bf G} = {\rm Res}_{E/F}{\rm GL}_2$.

The $L$-group of $\bf G$ is the semi-direct product ${}^{\rm L}G = {\rm GL}_2(\mathbb{C})^J \rtimes \mathcal{W}_F$, where $J = {\rm Hom}_F(E,\bar{F})$ and $\mathcal{W}_F$ acts on ${\rm GL}_2(\mathbb{C})^J$ via its natural action on $J$. The obvious $8$-dimensional representation of ${\rm GL}_2(\mathbb{C})^J$ on $\otimes_{j \in J} \mathbb{C}^2$ extends to a representation ${}^\otimes{\rm I}$ of ${}^{\rm L}G$, where again $\mathcal{W}_F$ acts on $\otimes_{j \in J} \mathbb{C}^2$ by permuting the factors $\mathbb{C}^2$ via its natural action on $J$. When $E$ is a cubic extension of $F$ in $\bar{F}$, we can consider the Weil group $\mathcal{W}_E$ of $\bar{F}$ over $E$ as a subgroup of index $3$ of $\mathcal{W}_F$, and ${}^\otimes{\rm I}$ is obtained by tensor induction from the representation of ${\rm GL}_2(\mathbb{C})^J \rtimes \mathcal{W}_E$, where ${\rm GL}_2(\mathbb{C})^J$ acts via its $j$-component, and $\mathcal{W}_E$ acts trivially. We call ${}^\otimes{\rm I}$ the \emph{Asai cube representation} of ${}^{\rm L}G$.

\subsection{} All our reductive groups over $F$ are quasi-split and pinned, i.e., equipped with a Borel subgroup and a Levi subgroup (in fact, a torus) of that Borel subgroup, both defined over $F$, and a $\mathcal{W}_F$-equivariant choice of isomorphisms of the root subgroup corresponding to a simple root (over $\overline{F}$) with the additive group ${\bf G}_a$ \cite{Bo1979}. For ${\rm GL}_2$, we take the standard pinning and $\bf G$ is equipped with the induced one [\emph{loc.\,cit.}].

To produce the $\gamma$-factor we need a (pinned) quasi-split reductive group $\bf H$ over $F$, which we take semisimple simply connected of type $D_4$, with triviality given by $E$ (see below). We consider the parabolic subgroup $\bf P$ of $\bf H$ (containing the fixed Borel subgroup) obtained by omitting the central root of the Dynkin diagram, and let $\bf L$ be its Levi subgroup containing the fixed maximal torus. We shall produce a morphism of pinned reductive groups $\iota : {\bf L} \rightarrow {\bf G}$ defined over $F$ and inducing a central isogeny on the derived subgroups.

The pinning of $\bf H$, gives a based root datum $\mathcal{R}_H$ with an action of $\mathcal{W}_F$ \cite{Bo1979} factoring through the Galois group of $\widetilde{E}$ over $F$. Since $\mathcal{R}_H$ with its action of $\mathcal{W}_F$ determines $\bf H$ up to unique isomorphism, we shall work only with $\mathcal{R}_H$ (or rather its dual root datum $\mathcal{R}_H^\vee$). Similarly $\iota$ induces a $\mathcal{W}_F$-equivariant morphism of based root data $\iota_\mathcal{R} : \mathcal{R}_L \rightarrow \mathcal{R}_G$, by which $\iota$ is uniquely determined, so again we only need describe $\iota_\mathcal{R}$.

\subsection{} On the dual side, we choose a complex (pinned) reductive group $\widehat{\bf H}$, with based root datum the dual $\mathcal{R}_H^\vee$ of $\mathcal{R}_H$; the natural action of $\mathcal{W}_F$ on $\mathcal{R}_H^\vee$ produces an L-group ${}^{\rm L}H = \widehat{H} \ltimes \mathcal{W}_F$.

We let $\widehat{\bf P}$ be the parabolic subgroup of $\widehat{\bf H}$ corresponding to $\bf P$ (and containing the underlying Borel subgroup). We let $\widehat{\bf L}$ be the Levi subgroup of $\widehat{\bf P}$ containing the underlying maximal torus $\widehat{\bf T}$, and write $\widehat{\bf N}$ for the unipotent radical of $\widehat{\bf P}$. Then $\widehat{\bf L}$ has based root datum $\mathcal{R}_L^\vee$ and the L-group ${}^{\rm L}L = \widehat{H} \ltimes \mathcal{W}_F$ is a subgroup of ${}^{\rm L}H$; it acts naturally on the Lie algebra ${}^{\rm L}\mathfrak{n}$ of $\widehat{\bf N}$, and it is that action that we use to construct our $\gamma$-factors (see \S~\ref{adjoint} below --could define things in \S~2).

\subsection{} We now specify $\mathcal{R}_H$ explicitly. Although we rather work with the dual root datum $(X,\Phi,X^\vee,\Phi^\vee)$, where $X$ is the group of characters of $\widehat{\bf T}$ and $\Phi$ is the set of roots of $\widehat{\bf T}$ in $\widehat{\bf H}$; the group $X^\vee$ is the group of cocharacters of $\widehat{\bf T}$ and $\Phi^\vee$ is the set of coroots, equipped with a bijection $\alpha \mapsto \alpha^\vee$ of $\Phi$ onto $\Phi^\vee$. Finally, the group $\mathcal{W}_F$ acts linearly on $X$ (via a finite quotient), and that action preserves $\Phi$; the dual action on $X^\vee$ preserves $\Phi^\vee$, compatibly with the bijection $\alpha \mapsto \alpha^\vee$.

Our group ${}^{\rm L}H$ has type $D_4$ and to describe its root datum we use the notation of Bourbaki \cite{Bou}. However, we prefer to separate the roles of $X$ and $X^\vee$ (and not identify them via some Killing form), so that we let $X$ be the set of vectors in $V = \mathbb{R}^4$ with integer coordinates adding to an even number; we write $(e_1, \ldots, e_4)$ for the canonical basis of $V$, and choose $\alpha_1 = e_1 - e_2$, $\alpha_2 = e_2 - e_3$, $\alpha_3 = e_3 - e_4$ and $\alpha_4 = e_3 + e_4$ as a basis of $\Phi$. The vector space $V^\vee$ dual to $V$ has the basis $(e_1^\vee, \ldots, e_4^\vee)$ dual to $(e_1, \ldots, e_4)$; the simple coroots are $\alpha^\vee_1 = e^\vee_1 - e^\vee_2$, $\alpha^\vee_2 = e^\vee_2 - e^\vee_3$, $\alpha^\vee_3 = e^\vee_3 - e^\vee_4$ and $\alpha^\vee_4 = e^\vee_3 + e^\vee_4$; the lattice $X^\vee$ in $V^\vee$ is generated by $\frac{1}{2}(e_1^\vee + e_2^\vee + e_3^\vee + e_4^\vee)$ and $e_2^\vee$, $e_3^\vee$, $e_4^\vee$.

Writing $\mathfrak{S}$ for the group of permutations of $\left\{ 1,3,4 \right\}$, we have a natural action of $\mathfrak{S}$ on $V$ preserving $X$, fixing $\alpha_2$ and permuting $\alpha_1$, $\alpha_3$, $\alpha_4$ according to the indices.

\begin{center}
\begin{tikzpicture}
   \draw[circle] (180:1) node {$\bullet$};
   \draw[circle] (180:1) node[left] {$\alpha_1$};
   \draw[circle] (195:1) arc (195:285:1);
   \draw[circle] (300:1) node {$\bullet$};
   \draw[circle] (300:1.35) node {$\alpha_4$};
   \draw[circle] (315:1) arc (315:405:1);
   \draw[circle] (60:1) node {$\bullet$};
   \draw[circle] (60:1.35) node {$\alpha_3$};
   \draw[circle] (75:1) arc (75:165:1);
   \draw[circle] (0:0) node {$\bullet$} node[right] {$\alpha_2$};
   \draw[circle] (60:.125) -- (60:.875);
   \draw[circle] (180:.125) -- (180:.875);
   \draw[circle] (300:.125) -- (300:.875);
\end{tikzpicture}
\end{center}

\noindent Any group homomorphism $\mathcal{W}_F \rightarrow \mathfrak{S}$ then gives a group root datum with action of $\mathcal{W}_F$, determining a pinned reductive groups $\bf H$ over $F$. If $E$ is our fixed cubic extension of $F$ in $\bar{F}$, any identification of $J = {\rm Hom}_F(E,\bar{F})$ with $\left\{ 1, 3, 4 \right\}$ gives a homomorphism $\mathcal{W}_F \rightarrow \mathfrak{S}$ producing the group ${}^{\rm L}H$ we seek.

\begin{remark}\label{GFasai}
\emph{If we start with a global field $k$ and a cubic separable extension $l$ of $k$, we are producing groups ${\bf H}/k$ and ${}^{\rm L}H_k$. If $v$ is a place of $k$, the root datum for the $L$-group ${}^{\rm L}H_{k_v}$ of ${\bf H} \otimes_{k} k_v$ is obtained from the root datum of ${}^{\rm L}H_k$ by composing the action of $\mathcal{W}_k$ with the homomorphism $\mathcal{W}_{k_v} \rightarrow \mathcal{W}_k$ coming from the completion (such a homomorphism depends on an isomorphism of $\bar{k}$ with the algebraic closure of $k$ in $\bar{k}_v$, but changing it changes ${}^{\rm L}H_{k_v}$ to an isomorphic group). Note that even if the homomorphism $\mathcal{W}_k \rightarrow \mathfrak{S}$ is surjective, the local homomorphism $\mathcal{W}_{k_v} \rightarrow \mathfrak{S}$ might not be. For example, at a split place $v$ of a global cubic extension $l/k$ we have $l_v \simeq k_v \times k_v \times k_v$; the local homomorphism $\mathcal{W}_{k_v} \rightarrow \mathfrak{S}$ is trivial. This explains why we cannot restricted to the cases where $E/F$ is always a cubic extension, and why it is better to deal with a general situation of ${\rm GL}_2$-kind.}
\end{remark}

\subsection{Adjoint representation}\label{adjoint} The roots of $\widehat{\bf T}$ in $\widehat{\bf L}$ are the roots in $\Phi$ which are linear combinations of $\alpha_1$, $\alpha_3$, $\alpha_4$, whereas the roots of $\widehat{\bf T}$ in ${}^{\rm L}\mathfrak{n}$ are the (positive) roots in $\Phi$ where $\alpha_2$ appears with a positive coefficient. The adjoint representation of $\widehat{\bf L}$ on ${}^{\rm L}\mathfrak{n}$ has two irreducible components $r_i$, $i = 1$, $2$, and the corresponding roots of $\widehat{\bf T}$ are the roots in $\Phi$ where $\alpha_2$ appears with coefficient $i$. We use the LS-representation of ${}^{\rm L}G$ coming from $r_1$ via a morphism $\iota : {}^{\rm L}G \rightarrow {}^{\rm L}L$ that we now construct.

Now we relate ${}^{\rm L}L$ and ${}^{\rm L}G$, and $r_\mathcal{A}$ with ${}^\otimes{\rm I}$. With the chosen identification of $J$ with $\left\{ 1, 3, 4 \right\}$, ${}^{\rm L}G$ becomes ${\rm GL}_2(\mathbb{C})^{\left\{ 1,3,4 \right\}} \rtimes \mathcal{W}_F$; let us describe the corresponding root datum and relate it to the root datum for ${}^{\rm L}L$. For a root datum $(Y,\Psi,Y^\vee,\Psi^\vee)$ for ${}^{\rm L}G$, we can take
\begin{equation*}
   Y = Y_1 \oplus Y_2 \oplus Y_3 \text{ with } Y_i = \mathbb{Z} e_i \oplus \mathbb{Z} f_i,
\end{equation*}
where $\alpha_i = e_i - f_i$ for $i = 1, 3, 4$ as simple roots; similarly
\begin{equation*}
   Y^\vee = Y^\vee_1 \oplus Y^\vee_2 \oplus Y^\vee_3 \text{ with } Y^\vee_i = \mathbb{Z} e^\vee_i \oplus \mathbb{Z} f^\vee_i,
\end{equation*}
where $\alpha^\vee_i = e^\vee_i - f^\vee_i$ for $i = 1,3,4$ -- the duality is the obvious one, and $\mathcal{W}_F$ acts via its action on $\left\{ 1,3,4 \right\}$.

Consider the quotient ${}^{\rm L}\overline{G}$ of ${}^{\rm L}G$ by the central subgroup made out of central elements $(x_1,x_3,x_4)$ in ${\rm GL}_2(\mathbb{C})^{\left\{ 1,3,4 \right\}}$ such that $x_1 x_3 x_4 = 1$; the corresponding root datum is $(Z,\Psi,\bar{Z}^\vee,\overline{\Psi}^\vee)$, where $Z$ is the sublattice of $Y$ of elements $a_1e_1 + b_1f_1 + a_3e_3 + b_3d_3 + a_4e_4 + b_4f_4$ such that $a_1 + b_1 = a_3 + b_3 = a_4 + b_4$; then $\bar{Z}^\vee$ is the corresponding quotient of $Y^\vee$ and $\overline{\Psi}^\vee$ is the image of $\Psi^\vee$ in $\bar{Z}^\vee$. The action of $\mathcal{W}_F$ is, again, obtained via the action on $\left\{ 1,3,4 \right\}$. One verifies immediately that the root datum $(Z,\Psi,\bar{Z}^\vee,\overline{\Psi})$ is isomorphic to the one for ${}^{\rm L}L$ by sending $\alpha_i$ to $\alpha_i$, for $i = 1$, $3$, $4$, and $f_1 + f_2 + f_3$ to $\alpha_2$, so that dually $\bar{\alpha}_i^\vee$ in $\bar{Z}^\vee$ is sent to $\alpha_i^\vee$, for $i =1$, $3$, $4$, wheras the image of $e_1^\vee + f_1^\vee$ in $\bar{Z}^\vee$ (which is the same as the image of $e_2^\vee + f_2^\vee$ or $e_3^\vee  + f_3^\vee$), is sent to $\alpha_1^\vee + \alpha_3^\vee + \alpha_4^\vee - 2 \alpha_2^\vee$ (so that $e_1^\vee$ is sent to $\alpha_1^\vee - \alpha_2^\vee + \frac{1}{2}(\alpha_3^\vee + \alpha_4^\vee)$).

It follows that we can choose an isomorphism $\varphi$ of ${}^{\rm L}\overline{G}$ onto ${}^{\rm L}L$, compatible with the isomorphism of root data just described. Since the representation $r$ of $\widehat{\bf L}$ on ${}^{\rm L}\mathfrak{n}$ has weights $\alpha_2$, $\alpha_1 + \alpha_2$, $\alpha_2 + \alpha_3$, $\alpha_2 + \alpha_4$, $\alpha_1 + \alpha_2 + \alpha_3$, $\alpha_1 + \alpha_2 + \alpha_4$, $\alpha_2 + \alpha_3 + \alpha_4$, $\alpha_1 + \alpha_2 + \alpha_3 + \alpha_4$, we see that through $\widehat{\bf G} \rightarrow \overline{\widehat{\bf G}} \rightarrow \widehat{\bf L}$, $r$ gives rise to the tensor product representation of $\widehat{\bf G} = {\rm GL}_2(\mathbb{C})^{\left\{ 1,3,4 \right\}}$, $\left\{ g_1, g_3, g_4 \right\} \mapsto g_1 \otimes g_3 \otimes g_4$. That identification even extends to ${}^{\rm L}G$ and its action via $r_\mathcal{A}$: indeed in the representation of ${}^{\rm L}L$ on ${}^{\rm L}\mathfrak{n}$ we can choose bases for the root subspaces so that the action of $\mathfrak{S}$ (hence $\mathcal{W}_F$) on these vectors is given by its action on the roots via the permutation of $\left\{ 1,3,4 \right\}$. It is then clear that via ${}^{\rm L}G \rightarrow {}^{\rm L}L$, $r_1$ does indeed give ${}^\otimes{\rm I}$.

\subsection{Remark} It appears to be impossible to construct a quasi-split group ${\bf H}'$ over $F$, with a Levi subgroup ${\bf L}'$ isomorphic to $\bf G$ and giving rise to the representation $r$ by the LS method; this is where the extra datum of $\iota: {\bf L} \rightarrow {\bf G}$ is important. We study this issue in detail in \cite{HeLoFuture}.

\medskip

\noindent{\sc \Small Guy Henniart, Institut Universitaire de France et Universit\'e Paris-Sud, Laboratoire de Math\'ematiques d'Orsay, CNRS, Orsay cedex F-91405, France}\\
\emph{\Small E-mail address: }\texttt{\Small Guy.Henniart@math.u-psud.fr}

\noindent{\sc \Small Luis Alberto Lomel\'i, Instituto de Matem\'aticas, Pontificia Universidad Cat\'olica de Valpara\'iso, Blanco Viel 596, Cerro Bar\'on, Valpara\'iso, Chile}\\
\emph{\Small E-mail address: }\texttt{\Small Luis.Lomeli@pucv.cl}

\end{document}